\documentclass[a4paper]{scrartcl}
\usepackage[utf8]{inputenc}
\usepackage{amsmath,amssymb,amsthm,amsfonts}
\usepackage{mathtools,thmtools}
\usepackage[british]{babel}
\usepackage{csquotes}
\usepackage{hyperref}
\usepackage[nameinlink,noabbrev,capitalise]{cleveref}
\usepackage{enumerate}
\usepackage{xcolor}
\usepackage{comment}
\usepackage{romanbar}
\usepackage{bm}

\RequirePackage{tikz}
    \usetikzlibrary{positioning}
    \usetikzlibrary{decorations.pathreplacing}

\usepackage[maxbibnames=5,sorting=nyt,sortcites,giveninits=true]{biblatex}
\DeclareNameAlias{sortname}{family-given}
\usepackage[T2A,T1]{fontenc}
\DeclareSymbolFont{cyrillic}{T2A}{cmr}{m}{n} 
\DeclareMathSymbol{\rusp}{\mathalpha}{cyrillic}{239}
\DeclareMathSymbol{\rusz}{\mathalpha}{cyrillic}{214}
\DeclareMathSymbol{\rusg}{\mathalpha}{cyrillic}{227}
\DeclareMathSymbol{\rush}{\mathalpha}{cyrillic}{237}
\DeclareMathSymbol{\rusd}{\mathalpha}{cyrillic}{196}

\newcommand{\R}     {\mathbb{R}}
\newcommand{\N}     {\mathcal{N}}

\renewcommand{\P}   {\mathbb{P}}
\newcommand{\E}     {\mathbb{E}}

\newcommand{\eps}   {\varepsilon}
\let\S\relax
\newcommand{\S}     {\mathcal{S}}
\newcommand{\calP}     {\mathcal{P}}

\newcommand{\TV}    {\textup{TV}}
\newcommand{\W}{\mathcal{W}}
\newcommand{\tmix}{t_{\operatorname{mix}}}
\newcommand{\ind}{\mathbf{1}}
\newcommand{\diam}{\operatorname{diam}}

\newcommand{\Xf}{\vec{X}}
\newcommand{\Xb}{\cev{X}}
\newcommand{\Pf}{\vec{P}}
\newcommand{\Pb}{\cev{P}}
\newcommand{\Yf}{\vec{Y}}
\newcommand{\Yb}{\cev{Y}}
\newcommand{\Qf}{\vec{Q}}
\newcommand{\Qb}{\cev{Q}}
\newcommand{\comp}{\mathsf{c}}

\newcommand{\cev}[1]{\reflectbox{\ensuremath{\skew{-8}\vec{\reflectbox{\ensuremath{#1}}}}}}

\DeclarePairedDelimiterX{\norm}[1]{\lVert}{\rVert}{#1}

\DeclareMathOperator{\Gap}  {Gap}

\DeclareMathOperator{\Var}  {Var}

\DeclareMathOperator{\Unif} {Unif}

\DeclareMathOperator{\MH}  {MH}

\DeclareMathOperator{\Law}  {Law}

\theoremstyle{plain}
\newtheorem{theorem}{Theorem}
\newtheorem{lemma}[theorem]{Lemma}
\newtheorem{proposition}[theorem]{Proposition}
\newtheorem{corollary}[theorem]{Corollary}

\theoremstyle{definition}
\newtheorem{definition}{Definition}
\newtheorem{example}{Example}
\newtheorem{remark}{Remark}

\title{Local Geometric Mixing via Dobrushin Contraction with Applications to Diffusion Path Monte Carlo and the Proximal Sampler}
\author{%
  \Large Stefan Oberd\"orster%
  \thanks{%
    E-mail:\ \href{mailto:oberdoerster@uni-bonn.de}
                 {\texttt{oberdoerster@uni-bonn.de}},
    ORCID:\ \href{https://orcid.org/0009-0004-0734-0101}
                {0009-0004-0734-0101}.%
  }\\[0.6em]
  \large Institute for Applied Mathematics, University of Bonn
}
\date{\normalsize\today}

\begin{document}

\maketitle

\begin{abstract}
\noindent
Local geometric mixing localizes geometric mixing by requiring geometric convergence to equilibrium in total variation only over finitely many transitions.
It accommodates local convergence rates and captures rapid local equilibration, even when global mixing is much slower.
We establish and discuss local geometric mixing bounds through Dobrushin contraction.
We then apply this approach to Diffusion Path Monte Carlo, a recently proposed Markov chain Monte Carlo method, aimed at leveraging advances in score-based modeling, whose ideal transitions coincide with those of the Proximal Sampler.
Our analysis covers both the ideal method and its implementable Metropolis-adjusted counterpart, providing mixing guarantees under minimal assumptions.
For the ideal method, these guarantees complement recent spectral gap estimates, which we develop into mixing time bounds.
\end{abstract}

\section{Introduction}

Let $(\S,\mathfrak B)$ be a Polish state space endowed with its Borel $\sigma$-algebra, and denote the set of probability measures on $(\S,\mathfrak B)$ by $\calP(\S)$.
Considering a Markov transition kernel $\pi$ on $(\S,\mathfrak B)$ with invariant distribution $\mu\in\calP(\S)$, a central aspect of study is \emph{mixing}, that is, convergence to equilibrium
\[ \TV\bigl(\nu\pi^n,\mu\bigr)\ \longrightarrow\ 0\quad\text{as $n\to\infty$} \]
from a given initial distribution $\nu\in\calP(\S)$ in the total variation distance, defined by
\[ \mathrm{TV}(\eta,\tilde\eta)\ :=\ \sup\nolimits_{B\in\mathfrak{B}}\,\bigl|\eta(B)-\tilde\eta(B)\bigr|\quad\text{for any $\eta,\tilde\eta\in\calP(\S)$.} \]

A classical approach to analyze this convergence, going back to the work of Dobrushin \cite{Dobrushin56}, is to consider contraction of the Markov kernel in total variation.
If $\pi$ is a global contraction, that is, there exists $\rho>0$ such that
\begin{equation}\label{eq:globDob}
    \TV\bigl(\pi(x,\cdot),\pi(\tilde x,\cdot)\bigr)\ \leq\ 1-\rho\quad\text{for all $x,\tilde x\in\S$,}
\end{equation}
then
\begin{equation}\label{eq:unifgeomix}
    \TV\bigl(\nu\pi^n,\mu\bigr)\ \leq\ (1-\rho)^n\quad\text{for all $\nu\in\calP(\S)$ and $n\geq0$.}
\end{equation}
Condition \eqref{eq:globDob} is referred to as \emph{global Dobrushin contraction} and its consequence \eqref{eq:unifgeomix} as \emph{uniform geometric mixing}.
Note that both imply existence and uniqueness of the invariant distribution $\mu$.
Commonly, an additional multiplicative constant $C\geq1$ is allowed on the right hand side, which, to describe non-uniform geometric mixing, may depend on $\nu$.

Closely related to global Dobrushin contraction are the generally stronger \emph{minorization conditions}, dating back to the work of Doeblin \cite{DOEBLIN37,Doeblin40}, which assert the existence of $\eta\in\calP(\S)$ and $\rho>0$ such that
\begin{equation}\label{eq:globmin}
    \pi(x,\cdot)\ \geq\ \rho\,\eta\quad\text{for all $x\in\S$.}
\end{equation}

In many instances, (uniform) geometric mixing is too strong of a conclusion to fully capture the convergence to equilibrium of a Markov chain.
For illustration, consider the following example.

\begin{example}[Seldom-trapped Chain]\label{ex:seldomtrapped}
For $0<\epsilon\ll1$, on $\S=\mathbb S^1\cup\{0\}$, consider the Markov kernel
\[ \pi(x,\cdot)\ =\
    \begin{cases}
    \displaystyle\frac{1-\epsilon^2}{2}\,\operatorname{Unif}(\mathbb S^1)\ +\ \frac{\epsilon^2}{2}\,\delta_0\ +\ \frac12\,\delta_x & \text{if $x\in\mathbb S^1$,}\\[6pt]
    \displaystyle\frac{\epsilon(1-\epsilon)}{2}\,\operatorname{Unif}(\mathbb S^1)\ +\ \left(1-\frac{\epsilon(1-\epsilon)}{2}\right)\,\delta_0 & \text{if $x=0$}
    \end{cases} \]
with invariant distribution $\mu=(1-\epsilon)\,\Unif(\mathbb S^1)+\epsilon\,\delta_0$.

On the circle, both with probability close to $1/2$, the chain remains in its previous state or freely moves to an independent draw from the uniform distribution, whereas, in the origin, the chain is trapped and only escapes to the circle with a small probability.
From start on the circle, this induces mixing on two distinct time scales: initially with fast rate $1/2$ until the chain equilibrates on the circle, followed by slow exploration of the origin with rate $\epsilon/2$.
From start in the origin, the small escape probability restricts mixing to the slow rate $\epsilon/2$.
Indeed, explicit calculations show that, for all $n\geq0$,
\[ \TV\bigl(\pi^n(x,\cdot),\mu\bigr)\ =\
    \begin{cases}
    \displaystyle\max\left(2^{-n},\epsilon\left(1-\frac{\epsilon}{2}\right)^n\right) & \text{if $x\in\mathbb S^1$,}\\[6pt]
    \displaystyle(1-\epsilon)\left(1-\frac{\epsilon}{2}\right)^n & \text{if $x=0$.}
    \end{cases} \]

\begin{figure}[t]
\centering
\begin{tikzpicture}[scale=2,
  axis/.style={line width=1pt},
  plot label/.style={font=\small}]

\def\chainEps{0.02}
\edef\logEps{\fpeval{ln(\chainEps)}}
\edef\logOrigin{\fpeval{ln(1-\chainEps)}}
\edef\logRate{\fpeval{ln(1-\chainEps/2)}}
\edef\logMin{\fpeval{
  ln(10)*floor((min(\logEps,\logOrigin)+20*\logRate)/ln(10))
}}
\def\Y#1{1.8*(1-(#1)/\logMin)}

\draw[axis,->] (0,-0.05) -- (3.4,-0.05)
  node[plot label,below right] {$n$};
\draw[axis,->] (0,-0.05) -- (0,1.95)
  node[plot label,above right]
  {$\mathrm{TV}\bigl(\pi^n(x,\cdot),\mu\bigr)$};
\draw[axis] (0,-0.10) -- (0,-0.05)
  node[below,yshift=-3pt] {$\scriptstyle 0$};
\foreach \y/\label in {1.8/1,{\Y{\logEps}}/\epsilon} {
  \draw[axis] (-0.05,{\y}) -- (0,{\y})
    node[left,xshift=-4pt] {$\scriptstyle\label$};
}

\foreach \curve/\label in {
  {max(-\n*ln(2),\logEps+\n*\logRate)}/{x\in\mathbb S^1},
  {\logOrigin+\n*\logRate}/{x=0}} {
  \draw[line width=0.35pt]
    plot[variable=\n,domain=0:20,samples=21,
         mark=*,mark size=0.75pt,mark options={draw=none}]
    ({0.16*\n},{\Y{\curve}})
    node[plot label,anchor=south east,inner sep=0pt,yshift=12pt]
    {$\label$};
}
\edef\boundSlope{\fpeval{\chainEps^2}}
\edef\boundK{\fpeval{
  max(0,floor(-ln(\boundSlope)/ln(2)))
}}

\draw[black,dashed,line width=0.7pt]
plot[variable=\n,domain=0:20,samples=21]
({0.16*\n},
 {\Y{ln(2^(-min(\n,\boundK))
       +\boundSlope*(min(\n,\boundK)-1)
       +\chainEps)}});
\end{tikzpicture}
\caption{\textit{Mixing of the seldom-trapped chain in \Cref{ex:seldomtrapped}, shown on a logarithmic vertical scale.
From start on the circle, the chain locally equilibrates rapidly.
The subsequent exploration of the origin, as well as escape to the circle from start in the origin, proceed at a far slower rate.
The dashed line depicts the local geometric mixing bound due to \Cref{cor:mixlocDob}, see \Cref{ex:seldomtrappedbound}.}}
\label{fig:seldomtrapped}
\end{figure}

As the rate $\rho$ prescribed for (uniform) geometric mixing, see \eqref{eq:unifgeomix}, is uniform in the initial distribution $\nu$, and the number of transitions $n$, this notion cannot capture the fast local equilibration from start on the circle, and is instead limited to the slow global convergence rate $\epsilon/2$.
Correspondingly, the global Dobrushin contraction, determined by the slow contraction between circle and origin, reads
\[ \sup_{x,\tilde x\in\S}\TV\bigl(\pi(x,\cdot),\pi(\tilde x,\cdot)\bigr)\ =\ 1-\frac\epsilon2. \]
\end{example}

In order to capture faster-than-global local equilibration, as in our example, it seems natural to relax the Dobrushin contraction to hold locally instead of globally, that is, for some subset $D\subseteq\S$ of state space, to prescribe the existence of $\rho>0$ such that
\begin{equation}\label{eq:locDob}
    \TV\bigl(\pi(x,\cdot),\pi(\tilde x,\cdot)\bigr)\ \leq\ 1-\rho\quad\text{for all $x,\tilde x\in D$.}
\end{equation}
In our example, local contraction on the circle improves the slow global rate $\epsilon/2$ to $1/2$, the rate of local equilibration on the circle.
However, such \emph{local Dobrushin contraction} is, on its own, not sufficient for mixing to a given invariant distribution.
For instance the seldom-trapped chain with $\epsilon=0$, for which local Dobrushin contraction on the circle holds, leaves any convex combination of $\Unif(\mathbb S^1)$ and $\delta_0$ invariant while only converging to a single one determined by the initial distribution.

A well-established approach to conclude geometric mixing from local Dobrushin contraction is \emph{Harris' ergodic theorem}.
It additionally assumes a global Lyapunov drift condition towards the domain $D$ in which contraction holds, see \eqref{eq:locDob}, ensuring the chain to return to $D$ sufficiently regularly.
Specifically, for $D$ being a suitable sublevel set of $V:\S\to[0,\infty)$, with $\gamma\in(0,1)$ and $K\geq0$, one considers drift conditions of the form
\begin{equation}\label{eq:drift}
    \pi V(x)\ \leq\ \gamma\,V(x)\ +\ K\quad\text{for all $x\in\S$.}
\end{equation}
Many instances of Harris' theorem appearing in the literature are formulated in terms of local minorization conditions, similar to \eqref{eq:globmin}, instead of Dobrushin contraction.

Harris' ergodic theorem is rooted in \cite{Harris56}, showing how recurrence ensures existence of an invariant measure, unique up to normalization.
Subsequent work used regeneration to analyze repeated returns \cite{Athreya78,Nummelin78} and established geometric convergence under suitable drift and minorization conditions \cite{Popov77,Nummelin82,Chan89,Meyn92}.
Later developments made convergence bounds explicit \cite{MeTw1994,rosenthal1995minorization,Roberts01,Douc04}.
Hairer and Mattingly gave a particularly direct proof, showing how drift and minorization together produce contraction in a weighted total variation distance \cite{Hairer11b}.
See the monographs \cite{Meyn09,Douc18} for reference.

Crucially for capturing local equilibration as in our example, Harris' ergodic theorem asserts geometric mixing, i.e., \eqref{eq:unifgeomix} with initialization dependent multiplicative constant $C(\nu)\geq1$ on the right hand side.
In particular, the rate $\rho$ is uniform in the initial distribution $\nu$ and number of transitions $n$.
This immediately rules out the ability of Harris' theorem, as presented thus far, to describe fast local equilibration from certain initial distributions in the presence of slow mixing from others.
These limitations are discussed in \cite{Roberts96b,Qin21}.

To accurately describe local equilibration, instead of geometric mixing, we consider \emph{local geometric mixing}.
Therein, assuming $\mu$ to be invariant for $\pi$, for a given initial distribution $\nu\in\calP(\S)$, one aims at establishing $N\geq0$ and $\rho>0$ such that
\begin{equation}\label{eq:locgeomix}
    \TV\bigl(\nu\pi^n,\mu\bigr)\ \leq\ (1-\rho)^n\quad\text{for all $n\leq N$.}
\end{equation}
Since the total variation distance to equilibrium is non-increasing, this is equivalent to
\[ \TV\bigl(\nu\pi^n,\mu\bigr)\ \leq\ \max\bigl((1-\rho)^n,\delta\bigr)\quad\text{for all $n\geq0$} \]
with a suitable $\rho,\delta>0$.
This notion fully localizes geometric mixing by restricting the number of transitions for which the total variation distance to equilibrium decays geometrically with a given rate.
Importantly, during these transitions, the chain may effectively only visit a subset of state space, which permits a local rate $\rho(\nu,N)$ depending on the initialization and considered number of transitions.
In contrast, geometric mixing requires every realization of the chain to eventually explore the entire stationary mass, limiting the geometric decay, which is prescribed to hold for all $n\geq0$, to a global rate.
Note that this notion of local geometric mixing is strictly weaker than geometric mixing and, in particular, does not guarantee existence and uniqueness of an invariant measure.
One may again add a multiplicative constant $C\geq1$ on the right hand side.

Natural objects to quantify local mixing are mixing times
\[ \tmix(\eps,\nu)\ :=\ \inf\bigl\{n\geq0:\TV(\nu\pi^n,\mu)\leq\eps\bigr\} \]
to accuracy $\eps>0$ from start in $\nu\in\calP(\S)$.
To describe uniform mixing across a collection of initial distributions, one may take the supremum in the previous display.
For instance,
\[ \tmix(\eps,D)\ :=\ \sup\nolimits_{x\in D}\tmix(\eps,\delta_x)\quad\text{for $D\subseteq\S$.} \]

Analyses targeting local geometric mixing, often in form of bounds
\begin{equation}\label{eq:locgeomixlit}
    \TV\bigl(\nu\pi^n,\mu\bigr)\ \leq\ C(1-\rho)^n\ +\ \delta\quad\text{for all $n\geq0$}
\end{equation}
for suitable $C\geq1$ and positive $\rho$ and $\delta$, appeared both in the analytic and the probabilistic literature.
Early examples include the conductance bounds of Lov\'asz and Simonovits \cite{Lovasz93}, localizing the analysis via the use of $s$-conductance, as well as Rosenthal's coupling bounds \cite{rosenthal1995minorization}, separating insufficient visits to a domain in which a local minorization condition yields geometric mixing.
Later, local geometric mixing was studied in the context of the Metropolis-adjusted Langevin Algorithm (MALA).
Bou-Rabee and Hairer \cite{Nawaf13} apply Harris' theorem to a restricted variant of MALA.
Eberle \cite{Eberle14} establishes a Wasserstein bound similar to \eqref{eq:locgeomixlit}, separating exit from a domain in which Wasserstein contraction yields geometric convergence.
More recently, Rosenthal and Yang \cite{Yang23}, aiming at local geometric mixing, generalize Harris' theorem, localizing both drift and minorization conditions via controlling exit events.
Atchad\'e \cite{Atchade21} studies local spectral gap arguments.

In \Cref{sec:locDobmix}, we present a simple yet effective approach to local geometric mixing, combining local Dobrushin contraction with exit control.
Compared to Rosenthal and Yang's local Harris' theorem, it replaces local drift and minorization by local Dobrushin contraction and, if said contraction holds, serves as an alternative to their result.
Beyond contraction in one step, their combination of local drift and minorization rather corresponds to Dobrushin contraction over several steps, see \cite{Diss}.

In \Cref{sec:DPMC}, we introduce Diffusion Path Monte Carlo, a sampling method recently proposed in \cite{sifanDP,Hill26} whose ideal transitions follow first an Ornstein--Uhlenbeck process, noising the target distribution, and subsequently its time-reversal in law.
Leveraging advances in score-based modeling, it aims at adding sufficient noise to overcome potential barriers and thereby efficiently sample target distributions in which classical methods suffer metastability.
We identify its transition kernel with that of the Proximal Sampler \cite{Titsias18,Lee21,Sinho22}.

In \Cref{sec:DPMCmix}, after reviewing previous work on the convergence to equilibrium of the Proximal Sampler and ideal Diffusion Path Monte Carlo, we apply the results of \Cref{sec:locDobmix}.
Under assumptions of sufficient noising, we show (i) local mixing in one transition for any target and initial distributions, and (ii) geometric local mixing for targets with finite variance and arbitrary initialization.
This considerably relaxes assumptions on targets and initialization made in previous work.
Moreover, a complimentary regularization estimate turns recently established spectral gap estimates into mixing guarantees without requiring a warm start.

In \Cref{sec:MDPMC}, we present Metropolis-adjusted Diffusion Path Monte Carlo \cite{sifanDP,Hill26}, an implementable variant of the ideal method, and demonstrate its reversibility with respect to general target probability distributions based on Tierney's formulation \cite{Tierney98} of Metropolis--Hastings in general state spaces.

In the final \Cref{sec:MDPMCmix}, we transfer the Dobrushin contraction-based mixing analysis of ideal Diffusion Path Monte Carlo to its Metropolis-adjusted counterpart, providing quantitative local mixing guarantees whenever Metropolis acceptance probabilities are locally controlled in a domain, in which the dynamics are suitably stable.
To go full circle, we observe Metropolis-adjusted Diffusion Path Monte Carlo to display the characteristic behavior of the seldom-trapped chain, introduced above to motivate local geometric mixing.

\section{Local Geometric Mixing via Dobrushin Contraction}\label{sec:locDobmix}

As above, let $\pi$ be a Markov transition kernel on a Polish state space $\S$ with invariant distribution $\mu\in\calP(\S)$.
Denote $\bar\S=\S\times\S$ endowed with the product $\sigma$-algebra $\bar{\mathfrak B}=\mathfrak B\otimes \mathfrak B$, the associated projections by $\rusp_1,\rusp_2:\bar\S\to\S$, and the diagonal $\Delta=\{(x,x):x\in\S\}$.
A probability distribution $\bar\eta\in\calP(\bar\S)$ is a \emph{coupling} of $\eta,\tilde\eta\in\calP(\S)$ iff $\bar\eta\circ\rusp_1^{-1}=\eta$ and $\bar\eta\circ\rusp_2^{-1}=\tilde\eta$.
Similarly, a Markov transition kernel $\bar\pi$ on $\bar\S$ is a \emph{Markovian coupling} of $\pi$ iff $\bar\pi((x,\tilde x),\cdot)\circ\rusp_1^{-1}=\pi(x,\cdot)$ and $\bar\pi((x,\tilde x),\cdot)\circ\rusp_2^{-1}=\pi(\tilde x,\cdot)$ for all $(x,\tilde x)\in\bar\S$.

\begin{theorem}\label{thm:mixlocDob}
Let $\pi$ be a Markov transition kernel with invariant measure $\mu\in\calP(\S)$.
If there exists $\bar D\subseteq\bar\S$ and $\rho>0$ such that
\begin{equation}\label{eq:Dobctrbar}
    \TV\bigl(\pi(x,\cdot),\pi(\tilde x,\cdot)\bigr)\ \leq\ 1-\rho\quad\text{for all $(x,\tilde x)\in\bar D$},
\end{equation}
then, for all $n\geq0$, $\nu\in\calP(\S)$ and couplings $\bar\eta$ of $\nu$ and $\mu$, it holds
\begin{equation}\label{eq:mixlocDobbar}
    \TV\bigl(\nu\pi^n,\mu\bigr)\ \leq\ (1-\rho)^n\ +\ \mathbb P_{\bar\eta}\bigl(\bar T<n\bigr),
\end{equation}
where $\bar T:=\inf\{n\geq0:\bar X_n\notin\bar D\}$ denotes the first exit time from $\bar D$ of any Markov chain with transition kernel $\bar\pi$ being a Markovian coupling of $\pi$ that realizes \eqref{eq:Dobctrbar} in the sense that $\bar\pi((x,\tilde x),\Delta^\comp)\leq1-\rho$ for all $(x,\tilde x)\in\bar D$.
\end{theorem}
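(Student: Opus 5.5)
Let $(\bar X_n)_{n\geq0}=((X_n,\tilde X_n))_{n\geq0}$ be the Markov chain with transition kernel $\bar\pi$ and initial law $\bar\eta$. The proof is a coupling argument in which the given Markovian coupling, run from $\bar\eta$, is stopped at the exit time $\bar T$.

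First, since $\bar\pi$ is a Markovian coupling of $\pi$, the marginal processes $(X_n)$ and $(\tilde X_n)$ are Markov chains with kernel $\pi$ and initial laws $\nu$ and $\mu$. This follows by induction on $n$ from the marginal property of $\bar\pi((x,\tilde x),\cdot)$. Hence $X_n\sim\nu\pi^n$, and $\tilde X_n\sim\mu\pi^n=\mu$ by invariance. The coupling inequality then gives
\[ \TV\bigl(\nu\pi^n,\mu\bigr)\ \leq\ \P_{\bar\eta}\bigl(X_n\neq\tilde X_n\bigr). \]
Measurability of $\Delta$ is fine because $\S$ is Polish.

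Second, I split the event $\{X_n\neq\tilde X_n\}$ according to whether $\bar T<n$:
\[ \P_{\bar\eta}\bigl(X_n\neq\tilde X_n\bigr)\ \leq\ \P_{\bar\eta}\bigl(\bar T<n\bigr)\ +\ \P_{\bar\eta}\bigl(\bar X_0,\dots,\bar X_{n-1}\in\bar D,\ \bar X_n\notin\Delta\bigr). \]
It remains to bound the last term by $(1-\rho)^n$. I would strengthen the claim to
\[ p_k\ :=\ \P_{\bar\eta}\bigl(\bar X_0,\dots,\bar X_{k-1}\in\bar D,\ \bar X_0,\dots,\bar X_k\notin\Delta\bigr)\ \leq\ (1-\rho)^k, \]
and then argue that the off-diagonal condition at all times can be dropped.

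That dropping step is the main obstacle. The statement does not assume the coupling stays on $\Delta$ once it gets there, so the chain could meet and later separate again. I would handle it in one of two ways.

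\emph{Option 1: modify the coupling.} Replace $\bar\pi$ on $\Delta$ by the synchronous kernel $\bar\pi((x,x),\cdot)=\pi(x,\cdot)\circ(\mathrm{id},\mathrm{id})^{-1}$. This is still a Markovian coupling. It still satisfies $\bar\pi(\cdot,\Delta^\comp)\leq1-\rho$ on $\bar D\cap\Delta$, where it gives $0$. It leaves $\bar\pi$ unchanged off $\Delta$, and sticky couplings only decrease $\P(X_n\neq\tilde X_n)$.

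This introduces a subtlety: $\bar T$ is then the exit time of the modified chain, while the statement says \enquote{any} realizing chain. I would read the theorem as valid for each such $\bar\pi$, and control the problem with $\P(\bar T<n)$ directly. The tool is the Markov-property estimate
\[ \P\bigl(\bar X_0,\dots,\bar X_{k}\in\bar D,\ \bar X_{k+1}\notin\Delta\bigr)\ \leq\ (1-\rho)\,\P\bigl(\bar X_0,\dots,\bar X_{k-1}\in\bar D,\ \bar X_k\notin\Delta\bigr), \]
but this requires $\bar X_k\notin\Delta$ at the previous step. That requirement is exactly why coalescence must be absorbing.

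\emph{Option 2: use \eqref{eq:Dobctrbar} at the level of laws.} Avoid path coupling after meeting by working with the sub-probability measures $m_k(\cdot)=\P(\bar X_0,\dots,\bar X_{k-1}\in\bar D,\ \bar X_k\in\cdot\,)$ restricted to $\Delta^\comp$. Points on $\Delta$ contribute zero to $\TV$ at all later times, so their mass can be discarded before propagating.

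Concretely, I would prove by induction that $\nu\pi^n-\mu$ equals the difference of the two marginals of a positive measure of total mass at most $(1-\rho)^n+\P(\bar T<n)$. At each step I remove the diagonal part, which cancels between the marginals, and either the exited part, which is charged to $\P(\bar T<n)$, or the contraction on $\bar D\cap\Delta^\comp$, which multiplies the remaining mass by $(1-\rho)$. The exited mass is dominated by $\P(\bar T<n)$ because the removed pieces form sub-events of $\{\bar T<n\}$ along the original chain.

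This second formulation avoids modifying $\bar\pi$, and I expect it is the cleanest way to make the argument rigorous.
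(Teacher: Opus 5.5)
Your proposal is correct, and in its Option 2 form it is essentially the paper's proof. The paper bounds $\TV(\nu\pi^n,\mu)$ directly by $\P_{\bar\eta}(\tau>n)$, with $\tau$ the first hitting time of $\Delta$, so the \emph{dropping} obstacle you identified never comes up. It then splits off $\P_{\bar\eta}(\bar T<n)$ and iterates the one-step bound $\bar\pi(\bar X_{k-1},\Delta^\comp)\leq1-\rho$ on $\{\tau>k-1,\bar T\geq k\}$. This is exactly your induction of discarding mass on $\Delta$ and charging exited mass to $\P_{\bar\eta}(\bar T<n)$. Your measure-level version also makes explicit why $\TV\leq\P_{\bar\eta}(\tau>n)$ holds for a Markovian coupling that is not sticky, which the paper takes from the coupling lemma without comment.
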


The second term on the right hand side of \eqref{eq:mixlocDobbar} accounts for the probability that the coupling chain leaves $\bar D$ before time $n$, inside of which Dobrushin contraction \eqref{eq:Dobctrbar} yields geometric mixing.
Note that a Markovian coupling realizing \eqref{eq:Dobctrbar} necessarily exists, see \cite{Ollivier09,VillaniOT}.

As the total variation to equilibrium is non-increasing, \eqref{eq:mixlocDobbar} immediately improves to
\begin{equation}\label{eq:infimp}
    \TV\bigl(\nu\pi^n,\mu\bigr)\ \leq\ \inf\nolimits_{k\leq n}\left((1-\rho)^k + \mathbb P_{\bar\eta}\bigl(\bar T<k\bigr)\right),
\end{equation}
preserving the smallest bound obtained up to transition $n$, while the original right hand side may grow as $n\to\infty$.
As the right hand side generally does not vanish in the limit, the result is specifically aimed at local geometric mixing, see \eqref{eq:locgeomix}.
We illustrate its ability to accurately capture local equilibration based on the seldom-trapped chain in \Cref{ex:seldomtrappedbound} below.

\medskip

Specializing to $\bar D=D\times D$ for some $D\subseteq\S$ immediately yields the following simplification.

\begin{corollary}\label{cor:mixlocDob}
Let $\pi$ be a Markov transition kernel with invariant measure $\mu\in\calP(\S)$.
It holds for all $n\geq0$, $\nu\in\calP(\S)$, and $D\subseteq\S$ that
\begin{equation}\label{eq:mixlocDob}
    \TV(\nu\pi^n,\mu)\ \leq\ \left(\sup_{x,\tilde x\in D}\TV\bigl(\pi(x,\cdot),\pi(\tilde x,\cdot)\bigr)\right)^n+\ \P_{\nu}(T<n)\ +\ \P_{\mu}(T<n),
\end{equation}
where $T:=\inf\{n\geq0:X_n\notin D\}$ denotes the first exit time from $D$ of the Markov chain with transition kernel $\pi$.
\end{corollary}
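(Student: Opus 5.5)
We obtain the corollary from \Cref{thm:mixlocDob} by taking $\bar D=D\times D$ and
\[ \rho\ :=\ 1-\sup_{x,\tilde x\in D}\TV\bigl(\pi(x,\cdot),\pi(\tilde x,\cdot)\bigr). \]
We may assume $\rho>0$. Otherwise the first term on the right hand side of \eqref{eq:mixlocDob} equals $1$ for $n\ge1$, and the bound is trivial since total variation is at most $1$. The case $n=0$ needs no separate treatment: the right hand side then has first term equal to $1$, by the convention $0^0=1$ for a supremum equal to $0$.

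With this choice, \eqref{eq:Dobctrbar} holds on $\bar D$. We fix a Markovian coupling $\bar\pi$ realizing it, which exists as noted after the theorem. For the initial law we choose the product coupling $\bar\eta=\nu\otimes\mu$ of $\nu$ and $\mu$. \Cref{thm:mixlocDob} then gives
\[ \TV(\nu\pi^n,\mu)\ \le\ (1-\rho)^n+\P_{\bar\eta}(\bar T<n), \]
and $(1-\rho)^n$ is exactly the first term in \eqref{eq:mixlocDob}.

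It remains to bound the exit probability. Since $\bar D=D\times D$, the chain $\bar X_k=(X_k,\tilde X_k)$ leaves $\bar D$ exactly when $X_k\notin D$ or $\tilde X_k\notin D$. Hence $\bar T=\min(T,\tilde T)$, where $T$ and $\tilde T$ are the first exit times from $D$ of the two components. The event $\{\bar T<n\}$ is therefore contained in $\{T<n\}\cup\{\tilde T<n\}$, and a union bound gives
\[ \P_{\bar\eta}(\bar T<n)\ \le\ \P_{\bar\eta}(T<n)+\P_{\bar\eta}(\tilde T<n). \]

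The one step needing care is to identify $\P_{\bar\eta}(T<n)$ with $\P_\nu(T<n)$, and likewise $\P_{\bar\eta}(\tilde T<n)$ with $\P_\mu(\tilde T<n)$. The event $\{T<n\}$ depends only on $(X_0,\dots,X_{n-1})$, so it suffices to show that each marginal process of a chain driven by a Markovian coupling is itself a Markov chain with kernel $\pi$. We prove this by induction on the finite-dimensional distributions. The initial laws are correct because $\bar\eta$ has marginals $\nu$ and $\mu$. For the inductive step, condition on the full past $\bar X_0,\dots,\bar X_k$. The conditional law of $X_{k+1}$ is then $\bar\pi(\bar X_k,\cdot)\circ\rusp_1^{-1}=\pi(X_k,\cdot)$, which depends only on $X_k$. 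Therefore $(X_k)$ is Markov with kernel $\pi$ from $\nu$, and symmetrically $(\tilde X_k)$ is Markov with kernel $\pi$ from $\mu$. Combining the three displays yields \eqref{eq:mixlocDob}.
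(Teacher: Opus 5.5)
Your proposal is correct and is exactly the specialization the paper has in mind: take $\bar D=D\times D$ in \Cref{thm:mixlocDob}, split $\{\bar T<n\}$ by a union bound into the exit events of the two components, and use that the marginals of the coupled chain are Markov chains with kernel $\pi$ started in $\nu$ and $\mu$. The paper calls this specialization immediate and gives no further argument; your write-up supplies the omitted details, including the degenerate case $\rho=0$ and the identification of the marginal exit probabilities.
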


Domains $\bar D$ satisfying \eqref{eq:Dobctrbar} for some $\rho>0$, equivalently formulated in terms of the existence of $(x,\tilde x)$-dependent minorization measures, are referred to as \emph{coupling sets} in \cite{Douc04}.
Sets $D\subseteq\S$ for which contraction holds in $D\times D$ are called \emph{pseudo-small}, generalizing \emph{small} sets, for which the minorization measure must be uniform in $(x,\tilde x)\in D\times D$, see \cite{Roberts01}.
Allowing the broader coupling sets is advantageous because they can be substantially larger than products of pseudo-small sets while retaining the same contraction constant, see \cite{Douc04,Diss}.

An analogous bound for Wasserstein distances appears in \cite{Eberle14}.
The specialization to total variation and extension to general coupling sets are of independent interest, providing a more direct approach to mixing.
Moreover, among general Wasserstein distances, contraction in total variation can be particularly straightforward to establish for certain Markov chains, including those involving Metropolis adjustment, see \cite{Diss,BouRabeeOberdoerster2024}.

\begin{proof}[Proof of \Cref{thm:mixlocDob}]
Let $\bar\pi$ be a Markovian coupling of $\pi$ realizing \eqref{eq:Dobctrbar}, and let $\bar\eta$ be a coupling of $\nu$ and $\mu$.
Let $(\bar X_n)_{n\geq0}$ have transition kernel $\bar\pi$ and denote its coupling time by
\[ \tau\ :=\ \inf\bigl\{n\geq0:\bar X_n\in\Delta\bigr\}. \]
By the coupling lemma, see \cite{Levin09}, we have
\[ \TV\bigl(\nu\pi^n,\mu\bigr)\ \leq\ \P_{\bar\eta}(\tau>n). \]
Localizing to $\bar D$ via isolating the exit probability yields
\[ \P_{\bar\eta}(\tau>n)\ \leq\ \P_{\bar\eta}\bigl(\tau>n,\bar T\geq n\bigr)\ +\ \P_{\bar\eta}\bigl(\bar T<n\bigr). \]
It therefore remains to show
\begin{equation}\label{eq:mixtoshow}
    \P_{\bar\eta}\bigl(\tau>n,\bar T\geq n\bigr)\ \leq\ (1-\rho)^n.
\end{equation}
Denoting by $(\bar{\mathfrak F}_n)_{n\geq0}$ the filtration generated by $(\bar X_n)_{n\geq0}$, it holds $\{\tau>n\}=\{\tau>n-1\}\cap\{\bar X_n\notin\Delta\}$ with $\{\tau>n-1\}\in\bar{\mathfrak F}_{n-1}$.
Since further $\{\bar T\geq n\}\in\bar{\mathfrak F}_{n-1}$,
\begin{align*}
\P_{\bar\eta}\bigl(\tau>n,\bar T\geq n\bigr)
\ &=\ \E_{\bar\eta}\Bigl(\P_{\bar\eta}\bigl(\bar X_n\notin\Delta\big|\bar{\mathfrak F}_{n-1}\bigr);\tau>n-1,\bar T\geq n\Bigr) \\
&\leq\ (1-\rho)\, \P_{\bar\eta}\bigl(\tau>n-1,\bar T\geq n-1\bigr),
\end{align*}
where we used that, by the Markov property and $\bar\pi$ realizing \eqref{eq:Dobctrbar}, on $\{\bar T\geq n\}$, we have almost surely
\[ \P_{\bar\eta}\bigl(\bar X_n\notin\Delta\big|\bar{\mathfrak F}_{n-1}\bigr) \ =\ \P_{\bar X_{n-1}}\bigl(\bar X_1\notin\Delta\bigr)\ =\ \bar\pi\bigl(\bar X_{n-1},\Delta^\comp\bigr)\ \leq\ 1-\rho. \]
Iterating this estimate shows \eqref{eq:mixtoshow} and finishes the proof.
\end{proof}

The following elementary estimates can, at times, provide suitable control over exit probabilities.
Similar bounds are used in \cite{Yang23}.

\begin{proposition}\label{prop:stab}
Let $\pi$ be a Markov transition kernel with invariant distribution $\mu\in\calP(\S)$, and $D\subseteq\S$.
Then, it holds
\[ \mathbb P_\mu(T<n)\ \leq\ n\,\mu(D^\comp)\quad\text{for all $n\geq0$,} \]
and, for any $\nu\in\calP(\S)$,
\begin{equation}\label{eq:stabnu}
    \mathbb P_\nu(T<n)\ \leq\ \nu(D^\comp)\ +\ (n-1)\,\sup\nolimits_{x\in D}\pi(x,D^\comp)\quad\text{for all $n\geq1$,}
\end{equation}
where $T$ denotes the first exit time from $D$ of the Markov chain with transition kernel $\pi$.
\end{proposition}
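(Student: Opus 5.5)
Both estimates follow from a union bound over the times at which the chain could be outside $D$. Write $(X_k)_{k\geq0}$ for the chain with kernel $\pi$ and observe that
\[ \{T<n\}\ =\ \bigcup\nolimits_{k=0}^{n-1}\{X_k\notin D\}. \]
For $n=0$ both sides of the first bound vanish, so assume $n\geq1$.

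For the stationary bound, the union bound gives
\[ \P_\mu(T<n)\ \leq\ \sum\nolimits_{k=0}^{n-1}\P_\mu(X_k\notin D). \]
By invariance, $\mu\pi^k=\mu$, so each summand equals $\mu(D^\comp)$, and the bound $n\,\mu(D^\comp)$ follows.

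For a general initial distribution $\nu$, I would decompose the event according to the first exit time rather than the marginal occupation times. This gives
\[ \{T<n\}\ =\ \{X_0\notin D\}\ \cup\ \bigcup\nolimits_{k=1}^{n-1}\{T=k\}, \]
and on $\{T=k\}$ with $k\geq1$ we have $X_{k-1}\in D$ and $X_k\notin D$. The key step is conditioning on $\mathfrak F_{k-1}$, the $\sigma$-algebra generated by $X_0,\dots,X_{k-1}$. The event $\{T\geq k\}=\{X_0,\dots,X_{k-1}\in D\}$ lies in $\mathfrak F_{k-1}$, so the Markov property yields
\[ \P_\nu(T=k)\ =\ \E_\nu\bigl(\pi(X_{k-1},D^\comp);\,T\geq k\bigr)\ \leq\ \sup\nolimits_{x\in D}\pi(x,D^\comp), \]
since $X_{k-1}\in D$ on $\{T\geq k\}$. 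Summing over $k=1,\dots,n-1$ and adding $\P_\nu(X_0\notin D)=\nu(D^\comp)$ gives \eqref{eq:stabnu}.

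I do not expect a real obstacle here. The only point needing care is the second bound. There we must bound exits from inside $D$ via first-exit decomposition instead of using $\P_\nu(X_k\notin D)$, because $\nu\pi^k(D^\comp)$ is not controlled directly. This is the same filtration argument used in the proof of \Cref{thm:mixlocDob}.
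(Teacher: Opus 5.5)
Your proof is correct and takes essentially the paper's route: the paper bounds $\P_\eta(T<n)$ by $\eta(D^\comp)+\sum_{k=0}^{n-2}Q_{\eta\pi^k}(D)$ with the one-step flow $Q_\eta(D)=\int_D\eta(dx)\,\pi(x,D^\comp)$. This is your first-exit decomposition with the constraint $\{T\geq k\}$ relaxed to $\{X_{k-1}\in D\}$, and the paper then bounds each flow term by $\sup_{x\in D}\pi(x,D^\comp)$. The only difference is cosmetic and lies in the stationary case: the paper stays within the same flow inequality and uses $Q_{\mu\pi^k}(D)=Q_\mu(D)\leq\mu(D^\comp)$ instead of your marginal union bound, so that one inequality serves both claims.
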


\begin{proof}[Proof of \Cref{prop:stab}]
Define the flow out of $D$ in one transition of $\pi$ initialized in $\eta\in\mathcal P(\mathcal S)$ to be
\begin{equation}\label{eq:flowoutofD}
    Q_\eta(D)\ :=\ \int_D\eta(dx)\pi(x,D^\comp),
\end{equation}
with which it holds that
\begin{equation}\label{eq:exittimeflow}
    \mathbb P_\eta(T<n)\ \leq\ \eta(D^\comp)+\sum_{k=0}^{n-2}Q_{\eta\pi^k}(D)\quad\text{for all $n\geq0$.}
\end{equation}
Indeed, the first term represents the probability of not starting in $D$ in the first place, while the sum bounds the flow out of $D$ in each transition.
For all $k\geq0$, since $\mu$ is invariant with respect to $\pi$, $\mu\pi^k=\mu$ and hence
\[ Q_{\mu\pi^k}(D)\ =\ Q_\mu(D)\ \leq\ \mu\pi(D^\comp)\ =\ \mu(D^\comp). \]
Therefore, from start in $\eta=\mu$, \eqref{eq:exittimeflow} and the fact that $T\geq0$ almost surely yield
\[ \mathbb P_\mu(T<n)\ \leq\ n\,\mu(D^\comp)\quad\text{for all $n\geq0$.} \]
Further, since
\[ Q_\eta(D)\ \leq\ \sup\nolimits_{x\in D}\pi(x,D^\comp)\quad\text{for all $\eta\in\calP(\S)$,} \]
\eqref{eq:exittimeflow} implies that from start in any initial distribution $\nu\in\calP(\S)$,
\[ \mathbb P_\nu(T<n)\ \leq\ \nu(D^\comp)+(n-1)\,\sup\nolimits_{x\in D}\pi(x,D^\comp)\quad\text{for all $n\geq1$.} \]
\end{proof}

We close this section with an illustration of the above results' ability to accurately capture the fast local equilibration of the seldom-trapped chain from start on the circle, see \Cref{fig:seldomtrapped}.

\begin{example}[Local Geometric Mixing of the Seldom-trapped Chain]\label{ex:seldomtrappedbound}
Consider the seldom-trapped chain introduced in \Cref{ex:seldomtrapped}.
On the circle $D=\mathbb S^1$, local Dobrushin contraction holds with rate $1/2$.
Via \Cref{prop:stab}, the exit probabilities from $D$ are for all $n\geq1$ controlled according to
\[ \P_\mu(T<n)\ \leq\ \epsilon+\tfrac12\epsilon^2(n-1)\quad\text{and}\quad\P_x(T<n)\ \leq\ \tfrac12\epsilon^2(n-1)\quad\text{for all $x\in\mathbb S^1$,} \]
where we used \eqref{eq:stabnu} for both estimates to obtain a sharper bound from start in $\mu$.
Therefore, \Cref{cor:mixlocDob} implies
\[ \TV\bigl(\pi^n(x,\cdot),\mu\bigr)\ \leq\ \inf\nolimits_{k\leq n}\bigl(2^{-k} + \epsilon^2(k-1)+\epsilon\bigr)\quad\text{for all $n\geq1$ and $x\in\mathbb S^1$.} \]
This accurately describes rapid local mixing on the circle, see \Cref{fig:seldomtrapped}.
\end{example}

\section{Diffusion Path Monte Carlo and the Proximal Sampler}\label{sec:DPMC}

In this section, we present Diffusion Path Monte Carlo \cite{sifanDP,Hill26}, also known as the Proximal Sampler \cite{Titsias18,Lee21,Sinho22}, that we subsequently study.
Having arisen in different contexts, the equivalence of these two formulations of the same Markov chain is not immediately obvious.
After introducing them separately, we make their equality precise and describe how they originally aimed at different regimes of the dynamics.
From here on, let $\mu$ be a Borel probability measure on $\mathbb R^d$ that we understand as target distribution of sampling.

\subsection{Diffusion Path Monte Carlo}

Proposed in \cite{sifanDP,Hill26}, Diffusion Path Monte Carlo is a discrete-time Markov chain on $\R^d$.
In each transition, it first follows an Ornstein--Uhlenbeck noising diffusion and subsequently follows the noising diffusion's time reversal in law.

Specifically, consider the Ornstein--Uhlenbeck \emph{noising process}
\begin{equation}\label{eq:OU}
    d\Xf_s\ =\ -\tfrac12\Xf_s\,ds\ +\ d\vec B_s,
\end{equation}
where $(\vec B_s)_{s\geq0}$ is a standard Brownian motion in $\mathbb R^d$.
Its Markov transition semigroup is
\begin{equation}\label{eq:Pf}
    \Pf_s(x,\cdot)\ =\ \mathcal N\bigl(e^{-s/2}x,(1-e^{-s})I_d\bigr).
\end{equation}
The noising process quickly transforms the target distribution towards a standard normal distribution, namely, if $\Xf_0\sim\mu$, its law at time $s$ is
\begin{equation}\label{eq:mus}
    \mu_s\ :=\ \mu\Pf_s\ =\ \bigl((e^{-s/2})_\#\mu\bigr)\ast\mathcal N\bigl(0,(1-e^{-s})I_d\bigr),
\end{equation}
where $(e^{-s/2})_\#\mu$ denotes the pushforward of $\mu$ by $x\mapsto e^{-s/2}x$.
Note that, for $s>0$, $\mu_s$ is absolutely continuous with respect to Lebesgue measure, with positive and smooth Lebesgue density that we also denote by $\mu_s$.

For a fixed \emph{noising time} $t>0$, the time-reversed process
\[ \Xb^{(t)}_s\ :=\ \Xf_{t-s}\quad\text{for $0\leq s\leq t$} \]
is a Markov process and, on $0\leq s<t$, a weak solution of the \emph{denoising equation}
\begin{equation}\label{eq:Xb}
    d\Xb^{(t)}_s\ =\ \left(\tfrac12\Xb^{(t)}_s+\nabla\log\mu_{t-s}(\Xb^{(t)}_s)\right)ds\ +\ d\cev B_s,
\end{equation}
for a Brownian motion $(\cev B_s)_{s\geq0}$, see \cite{Haussmann86}.
See also \cite{Nelson67,Anderson82,Foellmer85,Foellmer86,Cattiaux23} for historical background and further developments.
We denote by $\Pb_t$ the full-horizon transition kernel of the reversed process, that is,
\[ \Pb_t(x,\cdot)\ :=\ \Law\bigl(\Xb^{(t)}_t\big|\Xb^{(t)}_0=x\bigr)\ =\ \Law(\Xf_0\mid\Xf_t=x), \]
which, by Bayes' theorem, may be written as
\begin{equation}\label{eq:Pb}
    \Pb_t(x,dy)\ =\ \frac{\Pf_t(y,x)}{\mu_t(x)}\,\mu(dy)\quad\text{for all $x\in\R^d$,}
\end{equation}
where $\Pf_t(y,x)$ denotes the Lebesgue density of $\Pf_t(y,dx)$.
In particular, $\mu_t\Pb_t=\mu$.

Combining the transition kernels of the noising and denoising processes yields a transition of Diffusion Path Monte Carlo.

\begin{definition}\label{def:DPMC}
For fixed noising time $t>0$, the Markov transition kernel of \emph{ideal Diffusion Path Monte Carlo} takes the form
\[ \pi_{(t)}(x,\cdot)\ :=\ \Pf_t\Pb_t(x,\cdot)\ =\ \int_{\R^d}\Pf_t(x,dy)\Pb_t(y,\cdot)\quad\text{for all $x\in\R^d$.} \]
\end{definition}

\begin{remark}[Invariance and Reversibility]
\emph{Invariance} of $\mu$ under $\pi_{(t)}$ is immediate from the definition as
\[ \mu\pi_{(t)}\ =\ \mu\Pf_t\Pb_t\ =\ \mu_t\Pb_t\ =\ \mu. \]
Beyond invariance, $\pi_{(t)}$ is \emph{reversible} with respect to $\mu$, meaning that \emph{detailed balance}
$\mu(dx)\,\pi_{(t)}(x,dy)=\mu(dy)\,\pi_{(t)}(y,dx)$ as measures on $\R^d\times\R^d$
holds, or equivalently, that $\pi_{(t)}$ is a \emph{self-adjoint} operator on $L^2(\mu)$, that is,
$\bigl\langle f,\pi_{(t)}g\bigr\rangle_{L^2(\mu)} = \bigl\langle\pi_{(t)}f,g\bigr\rangle_{L^2(\mu)}$ for all $f,g\in L^2(\mu)$.
The self-adjointness of $\pi_{(t)}$ immediately follows from the fact that the operators
\[ \Pf_t:L^2(\mu_t)\to L^2(\mu)\quad\text{and}\quad\Pb_t:L^2(\mu)\to L^2(\mu_t) \]
are adjoint, i.e., $\Pf_t^\ast=\Pb_t$, where
$\bigl\langle f,\Pf_tg\bigr\rangle_{L^2(\mu)} = \bigl\langle\Pf_t^\ast f,g\bigr\rangle_{L^2(\mu_t)}$ for all $f\in L^2(\mu)$ and $g\in L^2(\mu_t)$,
so that $\pi_{(t)}=\Pf_t\Pf_t^\ast$.
\end{remark}

The transition kernel $\pi_{(t)}$ is called \emph{ideal} since it involves the exact denoising kernel $\Pb_t$, which is not known exactly in practice.
To obtain an implementable sampling method, $\Pb_t$ may be approximated by discretizing the denoising equation \eqref{eq:Xb} using a learned approximation of the score function $\nabla\log\mu_{t-s}$.
The bias introduced by discretization and score approximation can subsequently be corrected via a Metropolis--Hastings construction.
We present the details of the resulting \emph{Metropolis-adjusted Diffusion Path Monte Carlo} method, put forward in \cite{sifanDP,Hill26}, in \Cref{sec:MDPMC} and study its mixing in \Cref{sec:MDPMCmix}.

\subsection{The Proximal Sampler}

With step size $h>0$, the Proximal Sampler \cite{Titsias18,Lee21,Sinho22} performs Gibbs sampling in the extended probability distribution
\[ \bar\mu^{(h)}(dx\,dx')\ :=\ \mu(dx)\,\N(x,hI_d)(dx')\quad\text{on $\R^{2d}$.} \]
To write the transitions explicitly, consider the regular versions of the conditional distributions
\[ \bar\mu^{(h)}(dx'|x)\ =\ \N(x,hI_d)(dx')\quad\text{and}\quad\bar\mu^{(h)}(dx|x')\ =\ \frac{\mu(dx)\phi_h(x'-x)}{\int_{\R^d}\mu(dy)\phi_h(x'-y)}, \]
where $\phi_h(y):=(2\pi h)^{-d/2}\exp(-|y|^2/(2h))$.
A transition of the Proximal Sampler from state $x$ consists of a \emph{forward step} $X'\sim\bar\mu^{(h)}(\,\cdot\,|x)$, followed by a \emph{backward step} to its next state $X\sim\bar\mu^{(h)}(\,\cdot\,|X')$.

\begin{definition}\label{def:PS}
For fixed step size $h>0$, the Markov transition kernel of \emph{the Proximal Sampler} takes the form
\[ \pi^{(h)}(x,\cdot)\ :=\ \int_{\R^d}\bar\mu^{(h)}(dx'|x)\,\bar\mu^{(h)}(\,\cdot\,|x')\quad\text{for all $x\in\R^d$.} \]
\end{definition}

Reversibility with respect to $\mu$ of the Proximal Sampler as a Gibbs sampler is classical, see \cite{Liu94}.

\subsection{Two Sides of the Same Coin: Small vs. Large Noising Time}

Even though the forward/backward step and noising/denoising kernels in the Proximal Sampler and ideal Diffusion Path Monte Carlo are not equal, for suitably related step size and noising time, they coincide up to rescaling, more precisely, up to pushforward, respectively pullback, by the dilation $f(x)=e^{t/2}x$, see \eqref{eq:sameid}.
As recorded in the following proposition, a change of variables shows their transition kernels to be equal.

\begin{proposition}\label{prop:same}
The transition kernels $\pi_{(t)}$ of ideal Diffusion Path Monte Carlo with noising time $t>0$, see \Cref{def:DPMC}, and $\pi^{(h)}$ of the Proximal Sampler with step size $h>0$, see \Cref{def:PS}, coincide in the sense that
\[ \pi_{(t)}\ =\ \pi^{(h)}\quad\text{for}\quad t\ =\ \log(h+1). \]
\end{proposition}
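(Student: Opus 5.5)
The plan is to show that the forward step of the Proximal Sampler is the image of the Ornstein--Uhlenbeck noising kernel $\Pf_t$ under the dilation $f(y)=e^{t/2}y$, and that its backward step is the corresponding pullback of the denoising kernel $\Pb_t$. Integrating over the intermediate variable then gives $\pi_{(t)}=\pi^{(h)}$. Throughout, set $e^{t}=h+1$.

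\textbf{Forward step.} By \eqref{eq:Pf}, if $Y\sim\Pf_t(x,\cdot)$ then
\[ e^{t/2}Y\ \sim\ \N\bigl(x,(e^{t}-1)I_d\bigr)\ =\ \N(x,hI_d)\ =\ \bar\mu^{(h)}(\,\cdot\,|x). \]
Hence $f_\#\Pf_t(x,\cdot)=\bar\mu^{(h)}(\,\cdot\,|x)$ for every $x\in\R^d$.

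\textbf{Backward step.} I will show that $\Pb_t(y,\cdot)=\bar\mu^{(h)}(\,\cdot\,|e^{t/2}y)$ for every $y$. By \eqref{eq:Pb}, $\Pb_t(y,dz)\propto\Pf_t(z,y)\,\mu(dz)$, and the density is
\[ \Pf_t(z,y)\ \propto\ \exp\Bigl(-\frac{|y-e^{-t/2}z|^2}{2(1-e^{-t})}\Bigr)\ =\ \exp\Bigl(-\frac{|e^{t/2}y-z|^2}{2(e^{t}-1)}\Bigr)\ =\ \exp\Bigl(-\frac{|e^{t/2}y-z|^2}{2h}\Bigr). \]
This is $\phi_h(e^{t/2}y-z)$ up to a factor that does not depend on $z$. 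After normalization over $z$ with respect to $\mu$, the two measures coincide. The normalization is consistent because $\mu_t(y)=\int\Pf_t(z,y)\,\mu(dz)$ is exactly the corresponding constant multiple of $\int\phi_h(e^{t/2}y-z)\,\mu(dz)$, and both are positive and finite. Keeping these constants straight is the one step needing care. One also has to note that the kernels are defined pointwise by these explicit formulas, so no almost-everywhere ambiguity arises.

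\textbf{Combining.} By the change of variables $x'=e^{t/2}y$,
\[ \pi_{(t)}(x,\cdot)\ =\ \int\Pf_t(x,dy)\,\bar\mu^{(h)}(\,\cdot\,|e^{t/2}y)\ =\ \int\bigl(f_\#\Pf_t(x,\cdot)\bigr)(dx')\,\bar\mu^{(h)}(\,\cdot\,|x')\ =\ \pi^{(h)}(x,\cdot). \]
This is the claimed identity with $t=\log(h+1)$.
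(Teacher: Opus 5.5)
Your proposal is correct and follows essentially the same route as the paper: you identify the forward step with $f_\#\Pf_t(x,\cdot)$ and the backward step with $\Pb_t\bigl(f^{-1}(x'),\cdot\bigr)$ under the dilation $f(y)=e^{t/2}y$, then conclude by a change of variables. The only cosmetic difference is in the backward identity. You verify it by direct Gaussian proportionality, where the omitted constant is exactly $e^{dt/2}$ and cancels between numerator and $\mu_t(y)$. The paper phrases the same cancellation as the Jacobian factor of the pushforward density $f_\#\mu_t$.
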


\begin{proof}[Proof of \Cref{prop:same}]
Let $t=\log(h+1)$ and $f(y):=e^{t/2}y$ for any $y\in\R^d$.
We write $f_\#\pi(x,B):=\pi(x,f^{-1}(B))$, where $x\in\R^d$ and $B\subseteq\R^d$ Borel, for the pushforward of a stochastic kernel $\pi$ on $\R^d$ by $f$.
The forward/backward step and noising/denoising kernels in the Proximal Sampler and ideal Diffusion Path Monte Carlo coincide up to pushforward, respectively pullback, by $f$, that is, for all $x,x'\in\R^d$,
\begin{equation}\label{eq:sameid}
    \bar\mu^{(h)}(\,\cdot\,|x)\ =\ f_\#\Pf_t(x,\cdot)\quad\text{and}\quad\bar\mu^{(h)}(dx|x')\ =\ \Pb_t\bigl(f^{-1}(x'),dx\bigr).
\end{equation}
The first identity holds as both sides equal $\N(x,hI_d)$, cf. \eqref{eq:Pf}.
The second one is shown below.
The previous display together with a change of variables shows the desired equality of transition kernels, namely that
\begin{align*}
\pi^{(h)}(x,\cdot)
\ &=\ \int_{\R^d}\bar\mu^{(h)}(dx'|x)\,\bar\mu^{(h)}(\,\cdot\,|x')
\ =\ \int_{\R^d}f_\#\Pf_t(x,dx')\Pb_t\bigl(f^{-1}(x'),\cdot\bigr) \\
&=\ \int_{\R^d}\Pf_t(x,dx')\Pb_t(x',\cdot)\ =\ \pi_{(t)}(x,\cdot)\quad\text{for all $x\in\R^d$.}
\end{align*}
We are left to show the second identity in \eqref{eq:sameid}.
For all $x\in\R^d$, $f_\#\Pf_t(x,\cdot)$ is absolutely continuous with respect to Lebesgue measure with density $f_\#\Pf_t(x,x')=\phi_h(x'-x)$, and so is the $\mu$-average of its first component, whose Lebesgue density we denote by $\mu(f_\#\Pf_t)$.
Hence, for all $x'\in\R^d$,
\[ \bar\mu^{(h)}(dx|x')\ =\ \frac{\mu(dx)\phi_h(x'-x)}{\int_{\R^d}\mu(dy)\phi_h(x'-y)}\ =\ \frac{\mu(dx)f_\#\Pf_t(x,x')}{\mu(f_\#\Pf_t)(x')}\ =\ \frac{f_\#\Pf_t(x,x')}{f_\#\mu_t(x')}\mu(dx), \]
where we used that $\mu(f_\#\Pf_t)=f_\#(\mu\Pf_t)=f_\#\mu_t$ by \eqref{eq:mus}.
As $f_\#\nu(x)=e^{-dt/2}(\nu\circ f^{-1})(x)$ is the Lebesgue density of the pushforward of any $\nu(dx)=\nu(x)\,dx$ by $f$, it holds by \eqref{eq:Pb} that
\[ \bar\mu^{(h)}(dx|x')\ =\ \frac{\Pf_t\bigl(x,f^{-1}(x')\bigr)}{\mu_t\circ f^{-1}(x')}\mu(dx)\ =\ \Pb_t\bigl(f^{-1}(x'),dx\bigr)\quad\text{for all $x'\in\R^d$.} \]
\end{proof}

The above proposition identifies the Proximal Sampler and ideal Diffusion Path Monte Carlo as two sides of the same coin, or rather Markov chain.
Their distinction arose from different practical considerations of their implementation, aiming at different regimes of said Markov chain---small vs. large step size or noising time.

On the one hand, in \cite{Lee21} putting forth the Proximal Sampler, the non-trivial backward sampling step from $\bar\mu^{(h)}(\,\cdot\,|x')$, also called \emph{restricted Gaussian oracle}, is proposed to be implemented via rejection sampling.
Assuming $\mu(dx)=\mu(x)\,dx$, therein, the term $\log\mu(x)$ in $\bar\mu^{(h)}(dx|x')\propto e^{\log\mu(x)-\frac{|x'-x|^2}{2h}}\,dx$ is approximated by $\log\mu(x')+\nabla\log\mu(x')\cdot(x-x')$, which, up to normalization, yields the easy to sample measure $\mathcal N\bigl(x'+h\nabla\log\mu(x'),hI_d\bigr)$.
To obtain a sample from the original backward step, samples from this normal distribution are proposed and accepted with a suitable probability one after another until a sample has been accepted.
For strongly log-concave $\mu$ with Lipschitz continuous log-density, \cite{Lee21} shows this strategy to produce an approximate sample from the backward step in, on average, finitely many iterations, assuming a sufficiently small step size $h$.
Subsequent works extend and improve the implementation of the backward step, see \cite{Sinho22,Liang22,Liang23,Fan23,Altschuler24,Huang24,Rakhlin26,Takagi26}.
Importantly, all these techniques require a sufficiently small step size and thus aim the resulting dynamics at \emph{small step sizes}.

On the other hand, Diffusion Path Monte Carlo, as proposed in \cite{sifanDP,Hill26}, intends to capitalize on the ability to approximately learn the score function $\nabla\log\mu_{t-s}$ appearing in the denoising equation \eqref{eq:Xb}.
The particular intention therein is to choose the noising time $t$ sufficiently large as to transform the target distribution $\mu$ into a noised measure $\mu_t$ close to the standard Gaussian distribution.
Intuitively, this enables the ideal dynamics, assuming access to the exact denoising process, to make global moves throughout general targets and overcome sampling bottlenecks faced by many other Markov chain Monte Carlo methods.
The therefore required noising times $t$, or, equivalently, Proximal Sampler step sizes $h=e^t-1$, are much larger than the small step sizes suitable for the previously discussed rejection sampling implementations.
In particular, in contrast to the original Proximal Sampler, Diffusion Path Monte Carlo is aimed at such \emph{large step sizes or noising times}.
In the sext section, we make this intuition rigorous.
\newpage

\section{Mixing of Ideal Diffusion Path Monte Carlo and the Proximal Sampler}\label{sec:DPMCmix}

In this section, we study the mixing of ideal Diffusion Path Monte Carlo and the Proximal Sampler.
Therein, in \Cref{sec:previous}, we first give an overview over the considerable previous work on their convergence to equilibrium, identifying a vacancy in the existing literature: mixing for large noising times or step sizes in general target distributions.
More precisely, we discover that, while small noising times already are well-understood, for large noising times, the convergence regardless of target, as intuited above, remains open.

In \Cref{sec:DPMCmixsub}, we fill this gap by providing mixing guarantees for large noising times without imposing any assumptions on the target distribution beyond being a probability measure.
Besides their generality regarding target distributions, the obtained mixing time bounds feature very mild sensitivity to initialization, holding uniformly across starts in a fixed ball.
The proof relies on the approach to local geometric mixing via Dobrushin contraction presented in \Cref{sec:locDobmix}.

Subsequently, in \Cref{sec:gapmix}, we show how bounds on the spectral gap of ideal Diffusion Path Monte Carlo and the Proximal Sampler, as developed in previous work, can be used to control mixing times from general initial distributions, instead of only from warm starts.

\subsection{Previous Work}\label{sec:previous}

\subsubsection{Small Noising Times or Step Sizes}

We start by reviewing the existing results on convergence to equilibrium of the Proximal Sampler, equivalently stated in terms of the transition kernel $\pi_{(t)}$ of ideal Diffusion Path Monte Carlo.

For strongly log-concave target distributions, meaning that $\mu(dx)=\mu(x)\,dx$ with $-\log\mu(x)$ being $m$-strongly convex for $m>0$, \cite{Lee21} shows contraction in the standard $L^2$-Wasserstein distance, which, in particular, yields the geometric contraction towards equilibrium
\[ \W_2\bigl(\nu\pi_{(t)},\mu\bigr)\ \leq\ \left(1-\left(1+\frac{m^{-1}}{e^t-1}\right)^{-1}\right)\W_2(\nu,\mu)\quad\text{for all $t>0$ and $\nu\in\calP(\R^d)$.} \]

Beyond strongly log-concave targets, \cite{Sinho22} shows both contraction toward equilibrium in relative entropy for targets satisfying a log-Sobolev inequality, and in the $\chi^2$-divergence for targets satisfying a Poincar\'e inequality.
More precisely, we say that $\mu$ satisfies a log-Sobolev inequality with constant $C_{LS}>0$, if
\begin{equation}\label{eq:LSI}
    \operatorname{Ent}_\mu(f^2)\ \leq\ 2C_{LS}\int_{\R^d}|\nabla f|^2\,d\mu\quad\text{for all $f\in C^\infty_c(\R^d)$,}
\end{equation}
where $C^\infty_c(\R^d)$ denote the smooth and compactly supported functions, and $\operatorname{Ent}_\mu(f):=\int_{\R^d}f\log\bigl(f/\int_{\R^d}f\,d\mu\bigr)\,d\mu$.
In such target distributions, \cite{Sinho22} shows
\[ H\bigl(\nu\pi_{(t)}\big|\mu\bigr)^{1/2}\ \leq\ \left(1-\left(1+\frac{C_{LS}}{e^t-1}\right)^{-1}\right)H(\nu|\mu)^{1/2}\quad\text{for all $t>0$ and $\nu\in\calP(\R^d)$,} \]
where the relative Entropy, also Kullback--Leibler divergence, to equilibrium from any $\eta\in\calP(\R^d)$ is defined by
\[ H(\eta|\mu)\ :=\
\begin{cases}
\int_{\R^d}\frac{d\eta}{d\mu}\log\frac{d\eta}{d\mu}\,d\mu & \text{if $\eta\ll\mu$,} \\
\infty & \text{otherwise.}
\end{cases} \]
More generally, assuming the target to satisfy a Poincar\'e inequality with constant $C_P>0$, that is,
\begin{equation}\label{eq:PI}
    \Var_\mu(f)\ \leq\ C_P\int_{\R^d}|\nabla f|^2\,d\mu\quad\text{for all $f\in C^\infty_c(\R^d)$},
\end{equation}
where $\Var_\mu(f):=\int_{\R^d}\bigl(f-\int_{\R^d}f\,d\mu\bigr)^2\,d\mu$, \cite{Sinho22} shows
\[ \chi^2\bigl(\nu\pi_{(t)}\big|\mu\bigr)^{1/2}\ \leq\ \left(1-\left(1+\frac{C_P}{e^t-1}\right)^{-1}\right)\chi^2(\nu|\mu)^{1/2}\quad\text{for all $t>0$ and $\nu\in\calP(\R^d)$,} \]
where the $\chi^2$-divergence to equilibrium from any $\eta\in\calP(\R^d)$ is
\[ \chi^2(\eta|\mu)\ :=\
\begin{cases}
\bigl\|\frac{d\eta}{d\mu}-1\bigr\|_{L^2(\mu)}^2 & \text{if $\eta\ll\mu$,} \\
\infty & \text{otherwise.}
\end{cases} \]

Note that the contraction toward equilibrium in the $\chi^2$-divergence of ideal Diffusion Path Monte Carlo and the Proximal Sampler is equivalent to its right spectral gap being lower bounded according to
\begin{equation}\label{eq:gapPI}
   \Gap\bigl(\pi_{(t)}\bigr)\ :=\ 1- \sup_{f\in L^2_0(\mu), f\ne0}\frac{\langle f,\pi_{(t)} f\rangle_{L^2(\mu)}}{\|f\|_{L^2(\mu)}^2}\ \geq\ \left(1+\frac{C_P}{e^t-1}\right)^{-1},
\end{equation}
where $L^2_0(\mu)$ denotes the space of square integrable functions with zero mean.
Indeed, the absolute spectral gap $1-\|\pi_{(t)}\|_{L^2_0(\mu)\to L^2_0(\mu)}$, which corresponds to the $\chi^2$-contraction coefficient toward equilibrium, equals the right spectral gap due to the fact that the transition operator of ideal Diffusion Path Monte Carlo and the Proximal Sampler, beyond being self-adjoint, is also positive semidefinite, that is,
\[ \bigl\langle f,\pi_{(t)}f\bigr\rangle_{L^2(\mu)}\ =\ \bigl\langle f,\Pf_t\Pf_t^\ast f\bigr\rangle_{L^2(\mu)}\ =\ \bigl\|\Pf_t^\ast f\bigr\|_{L^2(\mu_t)}^2\ \geq\ 0\quad\text{for all $f\in L^2(\mu)$.} \]

Interpolating between log-Sobolev and Poincar\'e inequalities, \cite{Sinho22} further consider target distributions satisfying Latala–Oleszkiewicz inequalities, and generalize their results to R\'enyi divergences.
Moreover, \cite{Andre26} studies convergence to equilibrium in terms of the relative Fisher information in strongly log-concave target distributions, and \cite{Sid25} in terms of general $\Phi$-divergences.

Even though the above results hold for all noising times or step sizes, they are effectively aimed at short noising times due to the imposed assumptions on the target distribution.
This is illustrated by \cite{Sinho22} showing the Proximal Sampler to follow an entropy-regularized Jordan--Kinderlehrer--Otto scheme \cite{Jordan98}, which, for small step sizes, approximates the Wasserstein gradient flow of the relative entropy in the space of probability measures, see \cite{Capdeville26}.
Consequently, for small step sizes, the dynamics approximate the overdamped Langevin diffusion, whose marginal distributions precisely follow said gradient flow.
In particular, in this step size regime, the Proximal Sampler is expected to make local moves and suffer metastability in multimodal target distributions as well as sub-geometric convergence to equilibrium in the presence of heavy tails, similarly to the overdamped Langevin diffusion, see \cite{Bovier04,Balasubramanian24}.
Therefore, assumptions on the target as above, imposing restrictions on such features, are natural in this regime.

\subsubsection{Large Noising Times or Step Sizes}

A recent result on the convergence to equilibrium of ideal Diffusion Path Monte Carlo and the Proximal Sampler aiming at large noising times or step sizes is due to \cite{sifanDP} and shows that, if $\mu=\sum_{i=1}^Nw_i\mu_i$ is a mixture of $N\geq1$ components $\mu_i\in\calP(\R^d)$, each satisfying the Poincar\'e inequality \eqref{eq:PI}, with non-negative weights $\sum_{i=1}^Nw_i=1$, then
\begin{equation}\label{eq:gapPImix}
    \Gap\bigl(\pi_{(t)}\bigr)\ \geq\ \left(1+\frac{C_P}{e^t-1}\right)^{-1}\exp\left(-\left(\frac{\max_{1\leq i,j\leq N}\W_2(\mu_i,\mu_j)}{2\sqrt{e^t-1}}\right)^2\right).
\end{equation}
The mixture assumption allows for multimodal targets with strongly separated modes.
For sufficiently large noising times $t\gtrsim\log\bigl(\max_{i,j}\W_2(\mu_i,\mu_j)^2+1\bigr)$, this multimodality slows the $\chi^2$-convergence at most by an absolute constant compared to targets satisfying the Poincar\'e inequality globally.
In contrast, shorter noising times yield the spectral gap lower bound to degenerate exponentially, reflecting the expected metastability.

\subsection{Local Mixing for Large Noising Times in General Target Distributions}\label{sec:DPMCmixsub}

In regards to controlling mixing times of ideal Diffusion Path Monte Carlo and the Proximal Sampler for large noising times or step sizes, the spectral gap bound \eqref{eq:gapPImix} of \cite{sifanDP} suffers two crucial restrictions:
On the one hand, while allowing targets with strongly separated modes, it limits the individual components to satisfy a Poincar\'e inequality.
This, for instance, requires sub-exponential tails, ruling out heavier-tailed distributions.
On the other hand, the resulting convergence to equilibrium in $\chi^2$-divergence only controls the mixing time from warm starts.
More precisely, it holds for all $\eps>0$ and $\beta>0$ that
\begin{equation}\label{eq:mixwarm}
    \tmix(\eps,\nu)\ \leq\ \Bigl\lceil\Gap\bigl(\pi_{(t)}\bigr)^{-1}\log\bigl(\beta\,\eps^{-1}\bigr)\Bigr\rceil\quad\text{for all $\beta$-warm $\nu$,}
\end{equation}
where an initial distribution $\nu\in\calP(\R^d)$ is \emph{$\beta$-warm} if $\chi^2(\nu|\mu)^{1/2}\leq\beta$.
This directly follows from the fact that $\TV(\eta,\mu)\leq\frac12\chi^2(\eta|\mu)^{1/2}$ for all $\eta\in\calP(\R^d)$, and that $\pi_{(t)}$ contracts the $\chi^2$-divergence to equilibrium at rate $\Gap(\pi_{(t)})$.
In particular, a lower bound on the spectral gap, on its own, does not control general mixing times since initialization in point masses is not warm, that is, $\chi^2(\delta_x|\mu)^{1/2}=\infty$ for any $x\in\R^d$.

In this section, we obtain bounds on mixing times of ideal Diffusion Path Monte Carlo and the Proximal Sampler for large noising times or step sizes that do not suffer these two restrictions.
Notably, we do not impose any assumptions on the target or initial distributions beyond being probability measures.

Our results are based on the approach to local mixing via Dobrushin contraction, presented in \Cref{sec:locDobmix}.
To derive local Dobrushin contraction, we use that the total variation distance is non-expansive under the action of any Markov transition kernel.
Specifically, it holds for any such kernel $P$ on $\bigl(\R^d,\mathfrak B(\R^d)\bigr)$ that
\[ \TV(\eta P,\tilde\eta P)\ \leq\ \TV(\eta,\tilde\eta)\quad\text{for all $\eta,\tilde\eta\in\calP(\R^d)$.} \]
This is an immediate consequence of the dual representation of total variation
\[ \mathrm{TV}(\eta,\tilde\eta)\ =\ \tfrac12\sup\nolimits_{\|f\|_\infty\leq1}\bigl|\eta(f)-\tilde\eta(f)\bigr| \]
and the fact that $\|Pf\|_\infty\leq\|f\|_\infty$ for all measurable $f$.
Since $\pi_{(t)}=\Pf_t\Pb_t$ decomposes into the noising and denoising kernels, the previous display yields
\[ \TV\bigl(\pi_{(t)}(x,\cdot),\pi_{(t)}(\tilde x,\cdot)\bigr)\ \leq\ \TV\bigl(\Pf_t(x,\cdot),\Pf_t(\tilde x,\cdot)\bigr). \]
This estimate considerably simplifies the Dobrushin contraction, replacing the total variation distance between the full transition kernels $\pi_{(t)}$ by the distance between the noising kernels $\Pf_t(x,\cdot)=\mathcal N\bigl(e^{-t/2}x,(1-e^{-t})I_d\bigr)$.
As shown in \cite[Thm.~1]{Barsov86}, the latter evaluates to
\begin{equation}\label{eq:TV_Pf}
    \TV\bigl(\Pf_t(x,\cdot),\Pf_t(\tilde x,\cdot)\bigr)\ =\ 2\Phi\left(\frac{|x-\tilde x|}{2\sqrt{e^t-1}}\right)-1\ \leq\ \frac{|x-\tilde x|}{\sqrt{2\pi(e^t-1)}},
\end{equation}
where $\Phi$ denotes the cumulative distribution function of the standard normal distribution, and the inequality uses that $\Phi(r)-1/2=(2\pi)^{-1/2}\int_0^re^{-s^2/2}ds\leq r/\sqrt{2\pi}$ for all $r\geq0$.
Combining the previous two displays and taking the supremum shows
\begin{equation}\label{eq:locDob_DPMC}
    \sup_{x,\tilde x\in D}\TV\bigl(\pi_{(t)}(x,\cdot),\pi_{(t)}(\tilde x,\cdot)\bigr)\ \leq\ \frac{\diam(D)}{2\sqrt{e^t-1}}
\end{equation}
and hence local Dobrushin contraction for sufficiently large noising times.
Inserting this estimate into the assertion of \Cref{cor:mixlocDob} immediately yields the following local geometric mixing guarantee for ideal Diffusion Path Monte Carlo and the Proximal Sampler at large noising times or step sizes.

\begin{theorem}\label{thm:mixDPMC}
Let $\pi_{(t)}$ be the transition kernel with invariant distribution $\mu\in\calP(\R^d)$ of ideal Diffusion Path Monte Carlo with noising time $t>0$, or equivalently of the Proximal Sampler with step size $h=e^t-1$.
Then, it holds for all $D\subseteq\R^d$, initial distributions $\nu\in\calP(\R^d)$, and $n\geq0$ that
\[ \TV\bigl(\nu\pi_{(t)}^n,\mu\bigr)\ \leq\ \left(\frac{\diam(D)}{2\sqrt{e^t-1}}\right)^n+\ \P_{\nu}(T<n)\ +\ \P_{\mu}(T<n), \]
where $T$ denotes the first exit time from $D$.
\end{theorem}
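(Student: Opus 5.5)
The plan is to apply \Cref{cor:mixlocDob} to the kernel $\pi=\pi_{(t)}$ with invariant distribution $\mu$, for an arbitrary $D\subseteq\R^d$. That corollary bounds $\TV(\nu\pi_{(t)}^n,\mu)$ by the $n$-th power of the local Dobrushin coefficient $\sup_{x,\tilde x\in D}\TV(\pi_{(t)}(x,\cdot),\pi_{(t)}(\tilde x,\cdot))$ plus exactly the two exit probabilities $\P_\nu(T<n)+\P_\mu(T<n)$ that appear in the claim. Since $r\mapsto r^n$ is monotone on $[0,\infty)$, it therefore suffices to bound the local Dobrushin coefficient by $\diam(D)/(2\sqrt{e^t-1})$. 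If this quantity exceeds $1$, the claim is trivial anyway, because the left hand side is at most $1$. The case $n=0$ is also trivial, since the right hand side is then at least $1$.

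For the contraction estimate I will follow \eqref{eq:locDob_DPMC}.

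\emph{Step 1: reduce to the noising kernels.} Since $\pi_{(t)}(x,\cdot)=\Pf_t(x,\cdot)\Pb_t$ and total variation is non-expansive under any Markov kernel, we get
\[ \TV\bigl(\pi_{(t)}(x,\cdot),\pi_{(t)}(\tilde x,\cdot)\bigr)\leq\TV\bigl(\Pf_t(x,\cdot),\Pf_t(\tilde x,\cdot)\bigr). \]
This uses the dual representation of total variation together with $\|\Pb_tf\|_\infty\leq\|f\|_\infty$.

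\emph{Step 2: evaluate the Gaussian distance.} By \eqref{eq:TV_Pf}, which the paper takes from \cite{Barsov86}, the right hand side of Step 1 equals $2\Phi\bigl(|x-\tilde x|/(2\sqrt{e^t-1})\bigr)-1$. The elementary bound $\Phi(r)-\tfrac12\leq r/\sqrt{2\pi}$ turns this into at most $|x-\tilde x|/\sqrt{2\pi(e^t-1)}$.

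\emph{Step 3: simplify the constant.} Since $\sqrt{2\pi}\geq 2$, we have $|x-\tilde x|/\sqrt{2\pi(e^t-1)}\leq |x-\tilde x|/(2\sqrt{e^t-1})$. Taking the supremum over $x,\tilde x\in D$ gives the bound $\diam(D)/(2\sqrt{e^t-1})$.

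The only delicate point is the exact Gaussian total variation formula; its proof is elementary. The mean difference between the two noising kernels is $e^{-t/2}(x-\tilde x)$ and both have covariance $(1-e^{-t})I_d$, so the problem reduces to one dimension along the direction of $x-\tilde x$. The standardized separation is
\[ \frac{e^{-t/2}|x-\tilde x|}{\sqrt{1-e^{-t}}}=\frac{|x-\tilde x|}{\sqrt{e^t-1}}, \]
and the total variation distance between $\mathcal N(0,1)$ and $\mathcal N(a,1)$ is $2\Phi(a/2)-1$. With this formula in hand, the theorem follows from the three steps above and \Cref{cor:mixlocDob}. Finally, \Cref{prop:same} gives the identification $\pi_{(t)}=\pi^{(h)}$ with $h=e^t-1$, which covers the Proximal Sampler formulation.
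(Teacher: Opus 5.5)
Your proposal is correct and is the paper's argument: you obtain the local Dobrushin bound \eqref{eq:locDob_DPMC} from non-expansiveness of total variation under $\Pb_t$, the Gaussian formula \eqref{eq:TV_Pf}, and $\sqrt{2\pi}\geq2$, and then insert it into \Cref{cor:mixlocDob}. Your check of the standardized separation $|x-\tilde x|/\sqrt{e^t-1}$ is correct, and the trivial cases you treat separately are already covered by monotonicity of $r\mapsto r^n$, so they are harmless but unnecessary.
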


\subsubsection{Local Mixing in One Step}

Setting $n=1$ in the theorem, since $\P_\eta(T<1)=\eta(D^\comp)$ for all $\eta\in\calP(\R^d)$, we have
\begin{equation}\label{eq:DPMC_onestep}
    \TV\bigl(\nu\pi_{(t)},\mu\bigr)\ \leq\ \frac{\diam(D)}{2\sqrt{e^t-1}}\ +\ \nu(D^\comp)\ +\ \mu(D^\comp).
\end{equation}
This yields local mixing in a single transition for suitably large noising times $t$.
To make this precise, we need to specify a domain which encompasses sufficient mass of both $\mu$ and $\nu$, whose diameter then determines $t$.
Therefore, denoting by $B_R$ the closed centered ball of radius $R>0$ in $\R^d$, for $\delta>0$, define the \emph{centered partial diameter} of a probability measure $\eta\in\calP(\R^d)$ by
\begin{equation}\label{eq:diam}
    \diam(\eta,\delta)\ :=\ 2\,\inf\bigl\{R\geq0:\eta(B_R^\comp)\leq\delta\bigr\}.
\end{equation}
Allowing for arbitrary Borel sets instead of centered balls yields the \emph{partial diameter} commonly defined in metric measure geometry, see \cite{Shioya16}.
We restrict to centered balls for simplicity.
Choosing $D$ to be such a ball of diameter $\max\bigl(\diam(\mu,\tfrac\eps3),\diam(\nu,\tfrac\eps3)\bigr)$, for which the last two terms on the right hand side of \eqref{eq:DPMC_onestep} are each bounded above by $\eps/3$, and considering arbitrary initial states in a centered ball of radius $R$ yields the following mixing time bound.

\begin{corollary}[Local Mixing in One Step]\label{cor:oneDPMC}
Let $\pi_{(t)}$ be the transition kernel with invariant distribution $\mu\in\calP(\R^d)$ of ideal Diffusion Path Monte Carlo with noising time $t>0$, or equivalently of the Proximal Sampler with step size $h=e^t-1$.
For any $\eps>0$ and $R>0$, if
\begin{equation}\label{eq:oneDPMCt}
    t\ \geq\ \log\left(\Bigl(3\max\bigl(\tfrac12\diam(\mu,\tfrac\eps3),R\bigr)\eps^{-1}\Bigr)^2+1\right),
\end{equation}
then $\tmix(\eps,B_R)=1$.
\end{corollary}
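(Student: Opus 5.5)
We fix $\eps>0$, $R>0$ and an initial state $x\in B_R$, and apply the one-step bound \eqref{eq:DPMC_onestep} with $\nu=\delta_x$. The natural domain is the closed centered ball $D=B_{R'}$ with
\[ R'\ :=\ \max\bigl(r_\mu,R\bigr),\qquad r_\mu\ :=\ \inf\bigl\{r\geq0:\mu(B_r^\comp)\leq\tfrac\eps3\bigr\}\ =\ \tfrac12\diam(\mu,\tfrac\eps3), \]
so that $\diam(D)=2R'$. We then bound the three terms on the right hand side of \eqref{eq:DPMC_onestep} by $\eps/3$ each, which gives $\TV(\delta_x\pi_{(t)},\mu)\leq\eps$ and hence $\tmix(\eps,\delta_x)\leq1$.

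The three estimates are as follows.
\begin{enumerate}[(i)]
\item Since $x\in B_R\subseteq B_{R'}$, we have $\nu(D^\comp)=0$.
\item For $\mu(D^\comp)\leq\eps/3$ we need $\mu(B_{r_\mu}^\comp)\leq\eps/3$, i.e.\ that the infimum defining $r_\mu$ is attained. This follows by right-continuity: $B_r^\comp$ increases to $B_{r_\mu}^\comp$ as $r\downarrow r_\mu$, so $\mu(B_{r_\mu}^\comp)=\lim_{r\downarrow r_\mu}\mu(B_r^\comp)\leq\eps/3$. Since $R'\geq r_\mu$, we get $\mu(B_{R'}^\comp)\leq\eps/3$.
\item The contraction term $\diam(D)/(2\sqrt{e^t-1})=R'/\sqrt{e^t-1}$ is at most $\eps/3$ exactly when $e^t-1\geq(3R'\eps^{-1})^2$. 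Because $R'=\max(\tfrac12\diam(\mu,\tfrac\eps3),R)$, this is precisely the hypothesis \eqref{eq:oneDPMCt}.
\end{enumerate}

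It remains to show $\tmix(\eps,\delta_x)\neq0$, so that the mixing time equals exactly $1$ rather than merely at most $1$. This is where care is needed. We have $\TV(\delta_x,\mu)=1-\mu(\{x\})$, which exceeds $\eps$ unless $\mu$ has an atom of mass at least $1-\eps$ at $x$. So for $\eps<1$ and $\mu$ not close to a point mass at $x$, the value $n=0$ is excluded and $\tmix(\eps,\delta_x)=1$. In the degenerate case the statement should be read as $\tmix(\eps,B_R)\leq1$, with equality in all nontrivial cases. Taking the supremum over $x\in B_R$ gives the claim.

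The main obstacle is therefore not analytic. The substantive content is entirely in \Cref{thm:mixDPMC} and the contraction estimate \eqref{eq:locDob_DPMC}, so the only real points to get right are the attainment of the infimum in (ii) and this degenerate $n=0$ case.
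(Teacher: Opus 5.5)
Your proof is correct and follows the paper's route exactly: apply \eqref{eq:DPMC_onestep} with $\nu=\delta_x$ and $D$ the centered ball of radius $\max(\frac12\diam(\mu,\frac\eps3),R)$, so that each of the three terms is at most $\eps/3$. Your right-continuity argument that the infimum defining $\diam(\mu,\frac\eps3)$ is attained is a detail the paper leaves implicit.

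On equality you can be sharper than your hedge. Since $\tmix(\eps,B_R)$ is a supremum over the uncountable set $B_R$, while $\mu$ has at most countably many atoms, some $x\in B_R$ has $\TV(\delta_x,\mu)=1$. Hence $\tmix(\eps,B_R)=1$ holds exactly for every $\eps<1$, and the only genuine exception is the trivial case $\eps\geq1$, where $\tmix(\eps,B_R)=0$.
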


Notably, the centered partial diameter $\diam(\mu,\tfrac\eps3)$ is finite for all $\mu\in\calP(\R^d)$ and $\eps>0$ by continuity from below, that is, $\lim_{R\to\infty}\mu(B_R)=1$.
Therefore, the corollary asserts local mixing in one single step of ideal Diffusion Path Monte Carlo and the Proximal Sampler \emph{for any target probability distribution $\mu\in\calP(\R^d)$}, given that the noising time or step size is sufficiently large.

It is further worth emphasizing the mild dependence of the result on initialization, the sufficient noising time only growing logarithmically in $R$ while guaranteeing mixing in one step \emph{uniformly from start in any $x\in B_R$}.
In contrast, as discussed at the beginning of this section, see \eqref{eq:mixwarm}, mixing time bounds resulting from spectral gaps, and similarly from relative entropy contraction toward equilibrium, see \Cref{sec:previous}, suffer a far more severe sensitivity on initialization, requiring a suitably warm start.
However, in \Cref{sec:gapmix} below, we describe the ability of $\pi_{(t)}$ for sufficiently large noising time to generate a warm start in one transition.
This allows to extend spectral gap based mixing time bounds to general initial distributions.

According to \eqref{eq:oneDPMCt}, noising times sufficient for mixing to accuracy $\eps$ in one step uniformly from start in $B_R$ can be chosen of order
\begin{equation}\label{eq:suffmixone}
    \mathcal O\Bigl(\log\max\bigl(\diam(\mu,\tfrac\eps3),\eps^{-1},R,e\bigr)\Bigr).
\end{equation}
The dependence of the noising time on the inverse accuracy is discussed in detail in \Cref{ex:tails}.

\begin{remark}[Dependence on Partial Diameter]
We deduce local Dobrushin contraction of $\pi_{(t)}$ for suitable $t$, see \eqref{eq:locDob_DPMC}, from contraction of the noising kernel $\Pf_t$, see \eqref{eq:TV_Pf}.
This contraction describes the noising kernel's ability to locally forget its initial state, which yields mixing in one step as stated in \Cref{cor:oneDPMC}.
Intuitively, $\delta_x\Pf_t\approx\mu_t$ for all $x\in B_R$, based on which denoising produces a sample from $\pi_{(t)}(x,\cdot)=\delta_x\Pf_t\Pb_t\approx\mu_t\Pb_t=\mu$.

Closely related to how the noising kernel forgets its initial state is how the target distribution $\mu$ is forgotten, that is, the convergence of $\mu_t=\mu\Pf_t$ to $\mu_\infty:=\mathcal N(0,I_d)$.
For target distributions having non-negligible mass of order $\eps$ at distance $D/2$ from the origin, \cite{Miha25} quantifies necessary and sufficient noising times for $\TV(\mu_t,\mu_\infty)$ to decrease below $\eps$.
Specifically, for suitably large $D>0$ and small $\eps,\delta>0$, assuming the existence of $x\in\R^d$ with $|x|=\frac12D(1+\delta)$ such that
\begin{equation}\label{eq:data}
    \mu\bigl(B_{\frac12\delta D}(x)\bigr)\ =\ 4\eps\quad\text{and}\quad\mu\bigl(B_{\frac12(1+2\delta)D}\bigr)\ \geq\ 1-\frac\eps3,
\end{equation}
where $B_r(x)$ denotes the ball of radius $r$ centered in $x$, it is shown that
\[ \TV\left(\mu_{2\log(\frac12D)-\Theta(\log\log\eps^{-1})},\mu_\infty\right)\ \geq\ \frac32\,\eps\quad\text{while}\quad \TV\left(\mu_{2\log(\frac12D)+\Theta(\log\eps^{-1})},\mu_\infty\right)\ <\ \eps. \]

Note that \eqref{eq:data} implies $D\leq\diam(\mu,\tfrac\eps3)\leq(1+2\delta)D$ and hence $\diam(\mu,\tfrac\eps3)\asymp D$.
In this case, \Cref{cor:oneDPMC} establishes mixing in one step to accuracy $\eps$ uniformly from start in $B_R$ for sufficiently large noising times
\[ t\ =\ \mathcal O\Bigl(\log\max\bigl(D,\eps^{-1},R,e\bigr)\Bigr), \]
see \eqref{eq:suffmixone}.
The results of \cite{Miha25} therefore support the logarithmic dependence on the centered partial diameter and are consistent with the logarithmic dependence on inverse accuracy.
\end{remark}

\begin{remark}
The bound \eqref{eq:DPMC_onestep} on the total variation to equilibrium after one step can be obtained directly from \eqref{eq:locDob_DPMC}.
Indeed, using the invariance of $\mu$ with respect to $\pi_{(t)}$ and joint convexity of total variation, splitting the domain of integration into $D\times D$ and its complement, and inserting \eqref{eq:locDob_DPMC} together with the trivial bound $\TV\leq1$, shows
\begin{align*}
\TV\bigl(\nu\pi_{(t)},\mu\bigr)
\ &=\ \TV\bigl(\nu\pi_{(t)},\mu\pi_{(t)}\bigr)
\ \leq\ \int_{\R^d\times\R^d}\TV\bigl(\pi_{(t)}(x,\cdot),\pi_{(t)}(\tilde x,\cdot)\bigr)\,(\nu\otimes\mu)(dx\,d\tilde x) \\
&\leq\ \sup_{x,\tilde x\in D}\TV\bigl(\pi_{(t)}(x,\cdot),\pi_{(t)}(\tilde x,\cdot)\bigr)+(\nu\otimes\mu)\bigl((D\times D)^\comp\bigr) \\
&\leq\ \frac{\diam(D)}{2\sqrt{e^t-1}}+\nu(D^\comp)+\mu(D^\comp).
\end{align*}
Beyond this one-step bound, \Cref{thm:mixDPMC} controls total variation to equilibrium after $n>1$ transitions.
\end{remark}

\subsubsection{Local Geometric Mixing}

To derive mixing time bounds from the assertion of \Cref{thm:mixDPMC} for $n>1$ transitions, we need to control the in this case non-trivial exit probabilities from $D$.
The following result combines the above theorem with such control, obtained under the additional assumption on the target distribution to have finite variance.
While this restricts target distributions only slightly, it excludes some heavy-tailed ones, see \Cref{ex:tails}.

\begin{theorem}[Local Geometric Mixing]\label{thm:DPMCcoupmix}
Let $\pi_{(t)}$ be the transition kernel with invariant distribution $\mu\in\calP(\R^d)$ of ideal Diffusion Path Monte Carlo with noising time $t>0$, or equivalently of the Proximal Sampler with step size $h=e^t-1$.
Assume $\int_{\R^d}x\,\mu(dx)=0$.
Then, it holds for all $R>0$ and $n\geq0$ that
\begin{equation}\label{eq:DPMCcoupmix}
    \sup_{x\in B_R}\TV\bigl(\pi_{(t)}^n(x,\cdot),\mu\bigr)\ \leq\ \left(\frac{R}{\sqrt{e^t-1}}\right)^n+2n\,\mu(B_R^\comp)^{1/2}\exp\left(\frac{\Var(\mu)+R^2}{e^t-1}\right),
\end{equation}
where $\Var(\mu)=\int_{\R^d}|x|^2\mu(dx)$.
In particular, for all $\eps>0$, if
\begin{equation}\label{eq:DPMCcoupmixt}
    t\ \geq\ \log\left(\Var(\mu)+e^2\max\Bigl(\tfrac12\diam\bigl(\mu,\tfrac{\eps^2}{(4e\lceil\log(2\eps^{-1})\rceil)^2}\bigr),R\Bigr)^2+1\right),
\end{equation}
then $\tmix(\eps,B_R)\leq\bigl\lceil\log(2\eps^{-1})\bigr\rceil$.
\end{theorem}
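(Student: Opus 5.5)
We apply \Cref{thm:mixDPMC} with $D=B_{R}$, so that $\diam(D)=2R$ and the contraction term becomes $\bigl(R/\sqrt{e^t-1}\bigr)^n$. The exit probabilities are controlled by \Cref{prop:stab}. For $x\in B_R$ we use \eqref{eq:stabnu} with $\nu=\delta_x$, giving $\P_x(T<n)\le(n-1)\sup_{y\in B_R}\pi_{(t)}(y,B_R^\comp)$. From stationarity, $\P_\mu(T<n)\le n\,\mu(B_R^\comp)\le n\,\mu(B_R^\comp)^{1/2}$. It therefore suffices to prove the one-step leakage bound
\[ \pi_{(t)}(y,B_R^\comp)\ \le\ \mu(B_R^\comp)^{1/2}\exp\Bigl(\frac{\Var(\mu)+R^2}{e^t-1}\Bigr)\quad\text{for all }y\in B_R, \]
because then $(n-1)+n\le 2n$ gives \eqref{eq:DPMCcoupmix}.

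For the leakage bound we use Cauchy--Schwarz in $L^2(\mu)$. Let $h_y:=d(\delta_y\Pf_t)/d\mu_t=\Pf_t(y,\cdot)/\mu_t$. By \eqref{eq:Pb}, $\delta_y\pi_{(t)}$ has $\mu$-density $g_y=\Pf_t h_y$. Since $\Pf_t:L^2(\mu_t)\to L^2(\mu)$ is a contraction (Jensen), we get $\pi_{(t)}(y,A)\le\mu(A)^{1/2}\|g_y\|_{L^2(\mu)}\le\mu(A)^{1/2}\|h_y\|_{L^2(\mu_t)}$. It remains to bound
\[ \|h_y\|^2_{L^2(\mu_t)}=\int\frac{\Pf_t(y,w)^2}{\mu_t(w)}\,dw . \]
This is the main step. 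We lower bound the density $\mu_t(w)=\int\mu(dz)\Pf_t(z,w)$ by Jensen applied to $\log$. Writing $\sigma^2=1-e^{-t}$ and using $\int z\,\mu(dz)=0$, we obtain
\[ \mu_t(w)\ \ge\ (2\pi\sigma^2)^{-d/2}\exp\Bigl(-\frac{|w|^2+e^{-t}\Var(\mu)}{2\sigma^2}\Bigr). \]
The right-hand side is $\N(0,\sigma^2I_d)(w)$ times $\exp\bigl(-\Var(\mu)/(2(e^t-1))\bigr)$. The remaining Gaussian integral is explicit: $\int\N(m,\sigma^2)^2/\N(0,\sigma^2)=\exp(|m|^2/\sigma^2)$, here with $m=e^{-t/2}y$, so $|m|^2/\sigma^2=|y|^2/(e^t-1)\le R^2/(e^t-1)$. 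Altogether $\|h_y\|^2\le\exp\bigl((R^2+\tfrac12\Var(\mu))/(e^t-1)\bigr)$. After taking the square root this is even better than claimed, and in particular it implies the stated exponent.

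For the mixing time statement, set $n=\lceil\log(2\eps^{-1})\rceil$ and $R'=\max\bigl(\tfrac12\diam(\mu,\delta),R\bigr)$ with $\delta=\eps^2/(4en)^2$. We apply \eqref{eq:DPMCcoupmix} with radius $R'$, noting that $\sup_{B_R}\le\sup_{B_{R'}}$. By right continuity of $r\mapsto\mu(B_r^\comp)$, the infimum in the definition of $\diam(\mu,\delta)$ is attained, so $\mu(B_{R'}^\comp)\le\delta$. Condition \eqref{eq:DPMCcoupmixt} gives $e^t-1\ge\Var(\mu)+e^2R'^2$. Hence $R'/\sqrt{e^t-1}\le e^{-1}$, and the first term is at most $e^{-n}\le\eps/2$. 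The second term is at most
\[ 2n\cdot\frac{\eps}{4en}\cdot\exp\Bigl(\frac{\Var(\mu)+R'^2}{e^t-1}\Bigr). \]
With the coarse exponent of \eqref{eq:DPMCcoupmix}, this only yields $\tfrac\eps2e^{e^{-2}}$, which exceeds $\eps/2$. The constants only close if we keep the sharper exponent $(\tfrac12\Var(\mu)+R'^2)/(e^t-1)\le\tfrac12+e^{-2}<1$, which gives $\le\eps/2$. So when proving \eqref{eq:DPMCcoupmix} we will record the sharper factor $\tfrac12$ and use it here. This bookkeeping of constants, together with the Jensen lower bound on $\mu_t$, is where we expect the real work to be.
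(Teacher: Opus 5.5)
Your proposal is correct and follows essentially the paper's proof. It uses \Cref{thm:mixDPMC} with $D=B_R$, \Cref{prop:stab} for the exit probabilities, a Cauchy--Schwarz one-step leakage bound, and the Jensen lower bound on $\mu_t$ plus the Gaussian integral of \Cref{lem:chi2}; your $L^2(\mu)$ version of the leakage bound (density $\Pf_t h_y$, contractivity of $\Pf_t$) is just the adjoint of the paper's $L^2(\mu_t)$ version with $f=\Pb_t(\cdot,B_R^\comp)$ and $\|f\|_{L^2(\mu_t)}^2\leq\mu(B_R^\comp)$.

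One correction to your last paragraph: the stated coarse exponent already closes the constants, which is exactly how the paper concludes. Since $e^t-1\geq\Var(\mu)+e^2R'^2\geq\Var(\mu)+R'^2$, you get $\exp\bigl((\Var(\mu)+R'^2)/(e^t-1)\bigr)\leq e$. The second term is therefore at most $2n\cdot\tfrac{\eps}{4en}\cdot e=\tfrac{\eps}{2}$. Your $\tfrac{\eps}{2}e^{e^{-2}}$ comes from bounding $\Var(\mu)/(e^t-1)\leq1$ and $R'^2/(e^t-1)\leq e^{-2}$ separately, so keeping the sharper factor $\tfrac12$ is harmless but unnecessary.
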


According to \eqref{eq:DPMCcoupmixt}, noising times sufficient to guarantee geometric mixing to accuracy $\eps$ uniformly from start in $B_R$ can be chosen of order
\begin{equation}\label{eq:suffmix}
    \mathcal O\left(\log\max\Bigl(\Var(\mu),\diam\bigl(\mu,\tfrac{\eps^2}{(4e\lceil\log(2\eps^{-1})\rceil)^2}\bigr),R,e\Bigr)\right).
\end{equation}

\begin{example}\label{ex:tails}
\Cref{thm:DPMCcoupmix,cor:oneDPMC} guarantee geometric, respectively one-step, mixing to a given accuracy for suitably large noising times in general target distributions.
To illustrate the results, we specify their assertions for some important classes of target distributions distinguished based on their tails.
We say the target distribution $\mu$ has tail profile $f$, where $f:[0,\infty)\to[0,\infty)$ is bounded and non-increasing, if
\begin{equation}\label{eq:tail}
    \mu\left(B_{\frac12\diam(\mu,\frac12)+r}^\comp\right)\ \leq\ f(r)\quad\text{for all $r\geq0$.}
\end{equation}
With the generalized inverse $f^{-1}(\delta):=\inf\{r\geq0:f(r)\leq\delta\}$, it then holds
\begin{equation}\label{eq:diamtail}
    \diam(\mu,\delta)\ \leq\ \diam(\mu,\tfrac12)\ +\ 2\,f^{-1}(\delta)\quad\text{for all $\delta>0$.}
\end{equation}
This formulation divides the centered partial diameter into the centered diameter of the bulk, containing one half of the mass, and the width of the tails.
Further, the variance of $\mu$ can be controlled in terms of these notions.
Indeed, if $\int_{\R^d}x\,\mu(dx)=0$, by Cavalieri’s principle,
\begin{align*}
    \Var(\mu)\ &=\ \int_{\R^d}|x|^2\,\mu(dx)\ =\ \int_0^\infty2r\mu(B_{r}^\comp)\,dr\\
    &\leq\ \bigl(\tfrac12\diam(\mu,\tfrac12)\bigr)^2+\int_0^\infty\bigl(2r+\diam(\mu,\tfrac12)\bigr)f(r)\,dr,
\end{align*}
where the last step splits the range of integration and changes variables to insert \eqref{eq:tail}.

Inserting \eqref{eq:diamtail} and the previous display into \eqref{eq:suffmixone} and \eqref{eq:suffmix} shows that noising times sufficient for \emph{geometric} mixing to accuracy $\eps$ uniformly from start in $B_R$ can be chosen of order
\begin{equation}\label{eq:suffmixtails}
    \mathcal O\left(\log\max\Bigl(\diam(\mu,\tfrac12),\int_0^\infty rf(r)\,dr,f^{-1}\bigl(\tfrac{\eps^2}{(4e\lceil\log(2\eps^{-1})\rceil)^2}\bigr),R,e\Bigr)\right),
\end{equation}
while, for achieving the accuracy \emph{in one transition}, sufficient noising times can be chosen of order
\begin{equation}\label{eq:suffmixtailsone}
    \mathcal O\Bigl(\log\max\bigl(\diam(\mu,\tfrac12),f^{-1}(\tfrac\eps3),\eps^{-1},R,e\bigr)\Bigr).
\end{equation}

We specify these scaling in three classes of tails:
\begin{itemize}
\item[(i)]
The target has \emph{sub-exponential tails} if \eqref{eq:tail} holds with $f(r)=Ce^{-cr}$ for $c>0$ and a constant $C>0$.
For such tails, noising times sufficient for geometric mixing to accuracy $\eps$ uniformly from start in $B_R$ can be chosen of order
\begin{equation}\label{eq:texptails}
    \mathcal O\Bigl(\log\max\bigl(\diam(\mu,\tfrac12),c^{-1},\log\eps^{-1},R,e\bigr)\Bigr),
\end{equation}
while noising times sufficient to achieve an $\eps$-accurate sample in one step can be chosen of order
\begin{equation*}
    \mathcal O\Bigl(\log\max\bigl(\diam(\mu,\tfrac12),c^{-1},\eps^{-1},R,e\bigr)\Bigr).
\end{equation*}

These scalings differ only in their dependence on the inverse accuracy.
In particular, noising times scaling \emph{logarithmically} in $\eps^{-1}$ suffice to guarantee an $\eps$-accurate sample in one step, while a \emph{double logarithmic} scaling suffices to guarantee geometric mixing to said accuracy.

Assuming lighter sub-Gaussian tails, meaning that \eqref{eq:tail} holds with $f(r)=Ce^{-cr^2}$ for $c>0$ and a constant $C>0$, does not improve the scalings by more than multiplicative constants due to the logarithm.

\item[(ii)] 
The target has \emph{stretched exponential tails} if \eqref{eq:tail} holds with $f(r)=Ce^{-cr^a}$ for $c>0$ and constants $a\in(0,1)$, $C>0$.
For such tails, noising times sufficient for geometric mixing to accuracy $\eps$ uniformly from start in $B_R$ can be chosen of order
\begin{equation*}
    \mathcal O\Bigl(\log\max\bigl(\diam(\mu,\tfrac12),c^{-1/a},\Gamma(1+2/a),\log^{1/a}\eps^{-1},R,e\bigr)\Bigr),
\end{equation*}
while noising times sufficient to achieve an $\eps$-accurate sample in one step can be chosen of order
\begin{equation*}
    \mathcal O\Bigl(\log\max\bigl(\diam(\mu,\tfrac12),c^{-1/a},\eps^{-1},R,e\bigr)\Bigr).
\end{equation*}

\item[(iii)]
The target has \emph{polynomial tails} if \eqref{eq:tail} holds with $f(r)=C(1+cr)^{-p}$ for $c>0$ and constants $p>0$, $C>0$.
For such tails, the variance is only guaranteed to be finite if $p>2$.
In this case, noising times sufficient for geometric mixing to accuracy $\eps$ uniformly from start in $B_R$ can be chosen of order
\begin{equation*}
    \mathcal O\Bigl(\log\max\bigl(\diam(\mu,\tfrac12),c^{-1},(p-2)^{-1},\eps^{-1/p},R,e\bigr)\Bigr).
\end{equation*}
As the one-step result \Cref{cor:oneDPMC} does not require finite variance, noising times sufficient to achieve an $\eps$-accurate sample in one step can be chosen of order
\begin{equation*}
    \mathcal O\Bigl(\log\max\bigl(\diam(\mu,\tfrac12),c^{-1},\eps^{-(1+1/p)},R,e\bigr)\Bigr)\quad\text{for all $p>0$.}
\end{equation*}

Notably, the scaling difference between noising times sufficient to guarantee an $\eps$-accurate sample in one step compared to geometric mixing to said accuracy ($\log\eps^{-1}$ vs. $\log\log\eps^{-1}$), observed in the presence of sub-exponential tails, disappears when proceeding to heavier polynomial tails.
\end{itemize}

The tails above arise, for instance, under various functional inequalities:
If the log-Sobolev inequality \eqref{eq:LSI} holds, the target distribution has sub-Gaussian tails with $c\propto C_{LS}^{-1}$, see \cite{Ledoux99}.
More generally, if the Poincar\'e inequality \eqref{eq:PI} holds, the distribution has sub-exponential tails with $c\propto C_P^{-1/2}$, see \cite{Bobkov97}.
Similarly, weak or weighted Poincar\'e inequalities accommodate the heavier stretched exponential and polynomial tails, see \cite{Rockner01,Barthe05,Bobkov09}.
Note, however, that the reverse implications are false as the functional inequalities encode structure beyond the tail profile of the distribution.
\end{example}

\begin{proof}[Proof of \Cref{thm:DPMCcoupmix}]
We first show \eqref{eq:DPMCcoupmix}, which implies the subsequently stated mixing time bound.
Let $R>0$ and $n\geq0$.
Taking the supremum over all $x\in B_R$ in the assertion of \Cref{thm:mixDPMC} with $D=B_R$ and $\nu=\delta_x$ establishes
\[ \sup_{x\in B_R}\TV\bigl(\pi_{(t)}^n(x,\cdot),\mu\bigr)\ \leq\ \left(\frac{R}{\sqrt{e^t-1}}\right)^n+\sup_{x\in B_R}\mathbb P_x(T<n)+\mathbb P_\mu(T<n). \]
Thus, to conclude \eqref{eq:DPMCcoupmix}, it suffices to show
\begin{equation}\label{eq:DPMCcoupmixTS}
    \sup_{x\in B_R}\mathbb P_x(T<n)+\mathbb P_\mu(T<n)\ \leq\ 2n\,\mu(B_R^\comp)^{1/2}\exp\left(\frac{\Var(\mu)+R^2}{e^t-1}\right),
\end{equation}
which we do via \Cref{prop:stab}.
Specifically, we control the probability $\pi_{(t)}(x,B_R^\comp)$ of exiting $B_R$ in one step uniformly in $x\in B_R$.
Therefore, let $f(x):=\Pb_t(x,B_R^\comp)$.
Then, it holds for all $x\in\R^d$ that
\[ \pi_{(t)}(x,B_R^\comp)\ =\ \Pf_tf(x)\ =\ \int_{\R^d}f(y)\frac{\Pf_t(x,y)}{\mu_t(y)}\,\mu_t(dy), \]
where $\mu_t(y)$ and $\Pf_t(x,y)$ denote the Lebesgue densities of $\mu_t$ and $\Pf_t(x,\cdot)$, respectively.
Applying the Cauchy--Schwarz inequality to the previous display yields
\begin{equation}\label{eq:piexit}
    \pi_{(t)}(x,B_R^\comp)\ \leq\ \|f\|_{L^2(\mu_t)}\left\|\frac{\Pf_t(x,y)}{\mu_t(y)}\right\|_{L^2(\mu_t)}\ \leq\ \mu(B_R^\comp)^{1/2}\Bigl(\chi^2\bigl(\Pf_t(x,\cdot)\big|\mu_t\bigr)+1\Bigr)^{1/2},
\end{equation}
where we used that, since $\Pb_t(x,B_R^\comp)\in[0,1]$, the first factor satisfies
\[ \|f\|_{L^2(\mu_t)}^2\ =\ \int_{\R^d}\mu_t(dx)\Pb_t(x,B_R^\comp)^2\ \leq\ \mu_t\Pb_t(B_R^\comp)\ =\ \mu(B_R^\comp), \]
while the second factor can be written as
\[ \left\|\frac{\Pf_t(x,y)}{\mu_t(y)}\right\|_{L^2(\mu_t)}^2-1\ =\ \left\|\frac{\Pf_t(x,y)}{\mu_t(y)}-1\right\|_{L^2(\mu_t)}^2\ =\ \chi^2\bigl(\Pf_t(x,\cdot)\big|\mu_t\bigr). \]
The following lemma estimates the $\chi^2$-divergence in \eqref{eq:piexit}.

\begin{lemma}\label{lem:chi2}
Let $t>0$ and $\mu\in\calP(\R^d)$ such that $\int_{\R^d}x\,\mu(dx)=0$.
The noising kernel $\Pf_t$, see \eqref{eq:Pf}, satisfies
\[ \chi^2\bigl(\Pf_t(x,\cdot)\big|\mu_t\bigr)\ \leq\ \exp\left(\frac{\frac12\Var(\mu)+|x|^2}{e^t-1}\right)-1\quad\text{for all $x\in\R^d$,} \]
where $\mu_t=\mu\Pf_t$ and $\Var(\mu)=\int_{\R^d}|x|^2\mu(dx)$.
\end{lemma}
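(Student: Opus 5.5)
The plan is to write the $\chi^2$-divergence as
\[ \chi^2\bigl(\Pf_t(x,\cdot)\big|\mu_t\bigr)+1\ =\ \int_{\R^d}\frac{\Pf_t(x,y)^2}{\mu_t(y)}\,dy, \]
and to bound $1/\mu_t(y)$ from above pointwise by the reciprocal of an explicit Gaussian density. What remains is then a Gaussian integral that can be computed in closed form.

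Write $\sigma^2:=1-e^{-t}$ and $\Pf_t(z,y)=(2\pi\sigma^2)^{-d/2}\exp\bigl(-|y-e^{-t/2}z|^2/(2\sigma^2)\bigr)$. Then $\mu_t(y)=\int_{\R^d}\Pf_t(z,y)\,\mu(dz)$, a $\mu$-average of Gaussian densities.

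The first thing one might try is convexity of $u\mapsto1/u$. That gives $1/\mu_t(y)\le\int 1/\Pf_t(z,y)\,\mu(dz)$, which leads to $\int\exp(|x-z|^2/(e^t-1))\,\mu(dz)$. This cannot be bounded in terms of $\Var(\mu)$ alone, since the exponential sits outside the variance. So I would instead apply Jensen's inequality to the concave logarithm:
\[ \log\mu_t(y)\ \ge\ \int_{\R^d}\log\Pf_t(z,y)\,\mu(dz)\ =\ -\tfrac d2\log(2\pi\sigma^2)-\frac{1}{2\sigma^2}\int_{\R^d}\bigl|y-e^{-t/2}z\bigr|^2\mu(dz). \]
Expanding the square and using $\int z\,\mu(dz)=0$, the cross term vanishes and the integral equals $|y|^2+e^{-t}\Var(\mu)$. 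This is the step where the centering assumption enters, and it is the main point of the proof. It yields
\[ \frac{1}{\mu_t(y)}\ \le\ \exp\left(\frac{e^{-t}\Var(\mu)}{2\sigma^2}\right)\frac{1}{\varphi(y)},\qquad \varphi:=\text{density of }\N(0,\sigma^2I_d), \]
with $e^{-t}/(2\sigma^2)=1/(2(e^t-1))$.

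Inserting this bound leaves $\int\Pf_t(x,y)^2/\varphi(y)\,dy$, which is $1+\chi^2$ between two Gaussians with common covariance $\sigma^2I_d$ and means $e^{-t/2}x$ and $0$. Completing the square gives the standard identity $\int\N(m,\sigma^2I)^2/\N(0,\sigma^2I)=\exp(|m|^2/\sigma^2)$. Here this equals $\exp\bigl(e^{-t}|x|^2/(1-e^{-t})\bigr)=\exp\bigl(|x|^2/(e^t-1)\bigr)$.

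Multiplying the two exponential factors gives
\[ \chi^2\bigl(\Pf_t(x,\cdot)\big|\mu_t\bigr)+1\ \le\ \exp\left(\frac{\tfrac12\Var(\mu)+|x|^2}{e^t-1}\right), \]
which is the claim. Once the log-Jensen step is chosen, the only computational work is the routine Gaussian square-completion.
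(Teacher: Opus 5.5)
Your proof is correct and follows the paper's argument. The paper also bounds $\mu_t(y)\ge\exp\bigl(-\Var(\mu)/(2(e^t-1))\bigr)\Pf_t(0,y)$ via Jensen's inequality: it applies it to the convex exponential, which is equivalent to your log-concavity step, and uses the centering exactly where you do. It then evaluates $\int\Pf_t(x,y)^2/\Pf_t(0,y)\,dy=\exp\bigl(|x|^2/(e^t-1)\bigr)$ by completing the square.
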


\noindent
Its proof is given subsequent to the current proof of \Cref{thm:DPMCcoupmix}.
Inserting the lemma's assertion into \eqref{eq:piexit} shows
\[ \sup_{x\in B_R}\pi_{(t)}(x,B_R^\comp)\ \leq\ \mu(B_R^\comp)^{1/2}\exp\left(\frac{\Var(\mu)+R^2}{e^t-1}\right). \]
Inserting this bound into the exit probability estimate of \Cref{prop:stab} from start in $\nu=\delta_x$ and taking the supremum over all $x\in B_R$ yields
\begin{equation}\label{eq:BRexitprob}
    \sup_{x\in B_R}\mathbb P_x(T<n)\ \leq\ n\,\mu(B_R^\comp)^{1/2}\exp\left(\frac{\Var(\mu)+R^2}{e^t-1}\right).
\end{equation}
From start in $\mu$, \Cref{prop:stab} asserts
\[ \mathbb P_\mu(T<n)\ \leq\ n\,\mu(B_R^\comp), \]
which is bounded above by the right hand side of \eqref{eq:BRexitprob}.
Therefore,
\[ \sup_{x\in B_R}\mathbb P_x(T<n)+\mathbb P_\mu(T<n)\ \leq\ 2n\,\mu(B_R^\comp)^{1/2}\exp\left(\frac{\Var(\mu)+R^2}{e^t-1}\right), \]
which shows \eqref{eq:DPMCcoupmixTS} and hence finishes the proof of \eqref{eq:DPMCcoupmix}.

We now turn to the asserted mixing time bound.
Let $\eps>0$ and $R>0$.
Set $n=\lceil\log(2\eps^{-1})\rceil$ and $R'=\max\bigl(\frac12\diam(\mu,(\frac{\eps}{4en})^2),R\bigr)$.
By \eqref{eq:DPMCcoupmix} applied to the ball of radius $R'\geq R$,
\[ \sup_{x\in B_R}\TV\bigl(\pi_{(t)}^n(x,\cdot),\mu\bigr)\ \leq\ \left(\frac{R'}{\sqrt{e^t-1}}\right)^n+2n\,\mu(B_{R'}^\comp)^{1/2}\exp\left(\frac{\Var(\mu)+R'^2}{e^t-1}\right). \]
To control the two terms on the right hand side, let $t\geq\log\bigl(\Var(\mu)+(eR')^2+1\bigr)$, that is, as in \eqref{eq:DPMCcoupmixt}.
Then, as $t\geq\log\bigl((eR')^2+1\bigr)$, we have
\[ \left(\frac{R'}{\sqrt{e^t-1}}\right)^n\ \leq\ e^{-n}\ \leq\ \frac{\eps}{2}. \]
Further, by definition of the centered partial diameter, see \eqref{eq:diam}, $\mu(B_{R'}^\comp)\leq(\frac{\eps}{4en})^2$.
As $t\geq\log\bigl(\Var(\mu)+R'^2+1\bigr)$, it holds that
\[ 2n\,\mu(B_{R'}^\comp)^{1/2}\exp\left(\frac{\Var(\mu)+R'^2}{e^t-1}\right)\ \leq\ 2en\,\mu(B_{R'}^\comp)^{1/2}\ \leq\ \frac\eps2. \]
The previous three displays together imply
\[ \sup_{x\in B_R}\TV\bigl(\pi_{(t)}^n(x,\cdot),\mu\bigr)\ \leq\ \eps, \]
which establishes $\tmix(\eps,B_R)\leq n=\lceil\log(2\eps^{-1})\rceil$, closing the proof.
\end{proof}

Let us now turn to the proof of \Cref{lem:chi2}.

\begin{proof}[Proof of \Cref{lem:chi2}]
Let $x\in\R^d$.
The noising kernel is
\[ \Pf_t(x,\cdot)\ =\ \N\bigl(e^{-t/2}x,(1-e^{-t})I_d\bigr) \]
with Lebesgue density denoted by $\Pf_t(x,y)$.
The definition of the $\chi^2$-divergence shows
\begin{equation}\label{eq:chi2warm}
    \chi^2\bigl(\Pf_t(x,\cdot)\big|\mu_t\bigr)\ =\ \int_{\R^d}\frac{\Pf_t(x,y)^2}{\mu_t(y)}\,dy-1.
\end{equation}
By Jensen's inequality, the Lebesgue density of $\mu_t=\mu\Pf_t$ satisfies, for all $y\in\R^d$,
\begin{align*}
\mu_t(y)
\ &=\ \int_{\R^d}\mu(dx)\Pf_t(x,y)
\ =\ \bigl(2\pi(1-e^{-t})\bigr)^{-d/2}\int_{\R^d}\exp\left(-\frac{|y-e^{-t/2}x|^2}{2(1-e^{-t})}\right)\mu(dx) \\
&\geq\ \bigl(2\pi(1-e^{-t})\bigr)^{-d/2}\exp\left(-\int_{\R^d}\frac{|y-e^{-t/2}x|^2}{2(1-e^{-t})}\,\mu(dx)\right) \\
&=\ \exp\left(-\frac{\Var(\mu)}{2(e^t-1)}\right)\Pf_t(0,y),
\end{align*}
where, in the last step, we used
\[ \int_{\R^d}|y-e^{-t/2}x|^2\,\mu(dx)\ =\ e^{-t}\Var(\mu)+|y|^2. \]
Inserting this lower bound on $\mu_t(y)$ into \eqref{eq:chi2warm} yields
\begin{align*}
\chi^2\bigl(\Pf_t(x,\cdot)\big|\mu_t\bigr)
\ &\leq\ \exp\left(\frac{\Var(\mu)}{2(e^t-1)}\right)\int_{\R^d}\frac{\Pf_t(x,y)^2}{\Pf_t(0,y)}\,dy-1\\
&=\ \exp\left(\frac{\frac12\Var(\mu)+|x|^2}{e^t-1}\right)-1,
\end{align*}
where we used
\begin{align*}
\int_{\R^d}\frac{\Pf_t(x,y)^2}{\Pf_t(0,y)}\,dy
\ &=\ \int_{\R^d}\bigl(2\pi(1-e^{-t})\bigr)^{-d/2}\exp\left(-\frac{|y-2e^{-t/2}x|^2-2e^{-t}|x|^2}{2(1-e^{-t})}\right)dy \\
&=\ \exp\left(\frac{|x|^2}{e^t-1}\right).
\end{align*}
\end{proof}

\subsection{Mixing under a Spectral Gap}\label{sec:gapmix}

In the following theorem, we observe the ability of ideal Diffusion Path Monte Carlo to generate a warm initial distribution in one transition, see \eqref{eq:DPMCwarm}, even when initialized in point masses, which themselves have infinite $\chi^2$-divergence to equilibrium.
This enables a spectral gap based mixing time bound without restricting to warm initial distributions.

\begin{theorem}[Mixing under Spectral Gap]\label{thm:mixSG}
Let $\pi_{(t)}$ be the transition kernel with invariant distribution $\mu\in\calP(\R^d)$ of ideal Diffusion Path Monte Carlo with noising time $t>0$, or equivalently of the Proximal Sampler with step size $h=e^t-1$.
Assume $\int_{\R^d}x\,\mu(dx)=0$.
Then,
\begin{equation}\label{eq:DPMCwarm}
    \chi^2\bigl(\pi_{(t)}(x,\cdot)\big|\mu\bigr)\ \leq\ \exp\left(\frac{\Var(\mu)+|x|^2}{e^t-1}\right)-1\quad\text{for all $x\in\R^d$.}
\end{equation}
In particular, for all $\eps>0$ and $R>0$,
\begin{equation}\label{eq:mixSG}
    \tmix(\eps,B_R)\ \leq\ \left\lceil\Gap\bigl(\pi_{(t)}\bigr)^{-1}\left(\frac{\Var(\mu)+R^2}{e^t-1}+\log\eps^{-1}\right)\right\rceil+1.
\end{equation}
\end{theorem}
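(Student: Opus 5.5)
The warmness estimate \eqref{eq:DPMCwarm} follows from the data processing inequality for the $\chi^2$-divergence, applied to the denoising kernel, together with \Cref{lem:chi2}. Since $\pi_{(t)}(x,\cdot)=\Pf_t(x,\cdot)\Pb_t$ and $\mu=\mu_t\Pb_t$, the Markov kernel $\Pb_t$ is applied to both arguments, and $\chi^2$ is an $f$-divergence, hence non-expansive under Markov kernels. This gives
\[ \chi^2\bigl(\pi_{(t)}(x,\cdot)\big|\mu\bigr)\ \leq\ \chi^2\bigl(\Pf_t(x,\cdot)\big|\mu_t\bigr)\ \leq\ \exp\left(\frac{\frac12\Var(\mu)+|x|^2}{e^t-1}\right)-1, \]
which is even slightly stronger than claimed, since $\tfrac12\Var(\mu)\leq\Var(\mu)$. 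If one prefers to avoid citing data processing, it can be verified directly. The density $\frac{d\,\delta_x\Pf_t\Pb_t}{d\mu}$ equals $\Pf_t g$, where $g:=\frac{\Pf_t(x,\cdot)}{\mu_t}$. This follows from the adjointness $\Pf_t^\ast=\Pb_t$ recorded in the reversibility remark. Then Jensen's inequality, $(\Pf_t g)^2\leq\Pf_t(g^2)$, together with $\mu\Pf_t=\mu_t$, gives $\|\Pf_t g\|_{L^2(\mu)}^2\leq\|g\|_{L^2(\mu_t)}^2$, which is the same inequality.

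For the mixing time bound \eqref{eq:mixSG}, I use the first step to produce a warm start and then apply spectral gap contraction. Fix $x\in B_R$ and set $\nu:=\pi_{(t)}(x,\cdot)$. By \eqref{eq:DPMCwarm},
\[ \chi^2(\nu|\mu)^{1/2}\ \leq\ \exp\left(\frac{\Var(\mu)+R^2}{2(e^t-1)}\right)=:\beta. \]
Because $\pi_{(t)}$ is self-adjoint and positive semidefinite, its $\chi^2$-contraction coefficient on $L^2_0(\mu)$ is $1-\Gap(\pi_{(t)})$. Hence
\[ \TV\bigl(\nu\pi_{(t)}^m,\mu\bigr)\ \leq\ \tfrac12\bigl(1-\Gap\bigr)^m\beta\ \leq\ \tfrac12\,e^{-m\Gap}\beta, \]
which is exactly the reasoning behind \eqref{eq:mixwarm}. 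This is at most $\eps$ as soon as $m\geq\Gap^{-1}\bigl(\log\beta+\log\eps^{-1}\bigr)$, where the factor $\tfrac12$ is simply discarded. Since $\log\beta=\frac{\Var(\mu)+R^2}{2(e^t-1)}\leq\frac{\Var(\mu)+R^2}{e^t-1}$, the choice $m=\bigl\lceil\Gap^{-1}\bigl(\frac{\Var(\mu)+R^2}{e^t-1}+\log\eps^{-1}\bigr)\bigr\rceil$ suffices. Adding the initial warm-up step gives $m+1$ transitions from $x$. The resulting bound is uniform in $x\in B_R$, so taking the supremum over $x$ yields \eqref{eq:mixSG}.

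The main point requiring care is the first step, namely justifying data processing, or equivalently the adjoint-density identity, for a general target $\mu$ that need not have a Lebesgue density. The Bayes representation \eqref{eq:Pb} makes this rigorous. It shows that $\nu\ll\mu$ with density $y\mapsto\int\Pf_t(x,z)\Pf_t(y,z)/\mu_t(z)\,dz$, which is precisely $\Pf_t g$, and the remaining steps are routine. The monotonicity of $\TV$ under the kernel is not even needed here, because the estimate is applied directly at step $m+1$.
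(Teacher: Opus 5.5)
Your proposal is correct and follows essentially the paper's route: data processing for $\chi^2$ under $\Pb_t$ combined with \Cref{lem:chi2} gives the warmness bound, and then one warm-up transition followed by the warm-start bound \eqref{eq:mixwarm} yields \eqref{eq:mixSG}. The differences are only cosmetic. You keep the sharper warmness parameter $\exp\bigl(\frac{\Var(\mu)+R^2}{2(e^t-1)}\bigr)$ before loosening it, and you add a direct check of the data processing step via the Bayes representation \eqref{eq:Pb}, which the paper simply cites.
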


Similarly, the relative entropy from $\pi_{(t)}(x,\cdot)$ to equilibrium can be controlled uniformly in $x\in B_R$, which, via Pinsker's inequality, establishes mixing time bounds for general initial distributions based on contraction to equilibrium in relative entropy.

\begin{example}
To compare the bounds on the mixing time to accuracy $\eps>0$ from start in $B_R$ provided by the Dobrushin contraction based \Cref{thm:DPMCcoupmix} and the spectral gap based \Cref{thm:mixSG}, consider a centered mixture $\mu=\sum_{i=1}^Nw_i\mu_i$ of $N\geq1$ components $\mu_i\in\calP(\R^d)$, each satisfying the Poincar\'e inequality \eqref{eq:PI}, with non-negative weights $\sum_{i=1}^Nw_i=1$.

In this setting, \cite{sifanDP} lower bound the spectral gap.
Inserting their estimate, see \eqref{eq:gapPImix}, into the assertion \eqref{eq:mixSG} of \Cref{thm:mixSG} yields
\[ \tmix(\eps,B_R)\ \leq\ \left\lceil\left(1+\frac{C_P}{e^t-1}\right)\exp\left(\frac{W^2}{4(e^t-1)}\right)\left(\frac{C_Pd+W^2+R^2}{e^t-1}+\log\eps^{-1}\right)\right\rceil+1, \]
where $W:=\max_{1\leq i,j\leq N}\W_2(\mu_i,\mu_j)$ and we used that $\Var(\mu)\leq C_Pd+\frac12W^2$.
In particular, there exist noising times
\begin{equation}\label{eq:mixtSG}
    t\ =\ \mathcal O\Bigl(\log\max\bigl(C_Pd,W,R,e\bigr)\Bigr)
\end{equation}
such that $\tmix(\eps,B_R)=\mathcal O(\lceil\log\eps^{-1}\rceil)$.

The bound on the variance follow from
\[ \Var(\mu)\ \leq\ \sum_{i=1}^Nw_i\Var(\mu_i) + \frac12\sum_{i,j=1}^Nw_iw_j|m_i-m_j|^2, \]
where $m_i=\int_{\R^d}x\,\mu_i(dx)$, together with (i) the fact that, with the $k$-th component map $f_i^k(x):=x^k-m_i^k$, we have $\Var(\mu_i)=\sum_{k=1}^d\Var_{\mu_i}(f_i^k)\leq C_Pd$ by the Poincar\'e inequality \eqref{eq:PI} for $\mu_i$, and (ii) $|m_i-m_j|^2\leq\E|X_i-X_j|^2$ for any coupling $(X_i,X_j)$ of $\mu_i$ and $\mu_j$ so that taking the infimum over all such coupling shows $|m_i-m_j|^2\leq\W_2(\mu_i,\mu_j)^2\leq W^2$.

Further, since the mixture $\mu$ has sub-exponential tails with $c\propto C_P^{-1/2}$, see \cite{Chafai10}, we can invoke the implications of \Cref{thm:DPMCcoupmix} derived in \Cref{ex:tails}.
Specifically, using that $\diam(\mu,\tfrac12)\leq 2\sqrt{2\Var(\mu)}$ by Chebyshev's inequality together with the variance bound above, \eqref{eq:texptails} asserts the existence of noising times
\begin{equation}\label{eq:mixtDob}
    t\ =\ \mathcal O\Bigl(\log\max\bigl(C_Pd, W,\log\eps^{-1},R,e\bigr)\Bigr)
\end{equation}
such that $\tmix(\eps,B_R)=\mathcal O(\lceil\log\eps^{-1}\rceil)$.

Comparing \eqref{eq:mixtSG} and \eqref{eq:mixtDob} shows the spectral gap and Dobrushin contraction based approaches of \Cref{thm:mixSG,thm:DPMCcoupmix} to produce consistent results in the setting of mixture targets consisting of components satisfying a Poincar\'e inequality.
The noising time scaling obtained from Dobrushin contraction features a mild additional double logarithmic dependence on the inverse accuracy, which does not appear in the corresponding scaling based on spectral gaps.
This is expected since, while a spectral gap implies geometric convergence to equilibrium in $\chi^2$-divergence, the Dobrushin contraction approach only asserts local geometric mixing.
To ensure sufficient stability to achieve a given accuracy, the accuracy then enters the noising time.
\end{example}

\begin{proof}[Proof of \Cref{thm:mixSG}]
Let $x\in\R^d$.
Using that the $\chi^2$-divergence is non-increasing under the action of any Markov kernel, also known as the data processing inequality, we see
\[ \chi^2\bigl(\pi_{(t)}(x,\cdot)\big|\mu\bigr)
\ =\ \chi^2\bigl(\Pf_t(x,\cdot)\Pb_t\big|\mu_t\Pb_t\bigr)
\ \leq\ \chi^2\bigl(\Pf_t(x,\cdot)\big|\mu_t\bigr). \]
By \Cref{lem:chi2}, it follows that
\[ \chi^2\bigl(\pi_{(t)}(x,\cdot)\big|\mu\bigr)\ \leq\ \exp\left(\frac{\frac12\Var(\mu)+|x|^2}{e^t-1}\right)-1, \]
showing the first assertion \eqref{eq:DPMCwarm}.
To conclude the mixing time bound, let $\eps>0$ and $R>0$.
By the first assertion, for all $x\in B_R$, $\pi_{(t)}(x,\cdot)$ is $\beta$-warm with
\[ \beta\ =\ \exp\left(\frac{\Var(\mu)+R^2}{e^t-1}\right). \]
The mixing time bound from $\beta$-warm initial distributions \eqref{eq:mixwarm} then yields
\[ \tmix\bigl(\eps,\pi_{(t)}(x,\cdot)\bigr)\ \leq\ \left\lceil\Gap\bigl(\pi_{(t)}\bigr)^{-1}\left(\frac{\Var(\mu)+R^2}{e^t-1}+\log\eps^{-1}\right)\right\rceil. \]
Hence
\begin{align*}
\tmix(\eps,B_R)
\ &=\ \sup_{x\in B_R}\tmix(\eps,\delta_x)
\ \leq\ \sup_{x\in B_R}\tmix\bigl(\eps,\pi_{(t)}(x,\cdot)\bigr) + 1\\
&\leq\ \left\lceil\Gap\bigl(\pi_{(t)}\bigr)^{-1}\left(\frac{\Var(\mu)+R^2}{e^t-1}+\log\eps^{-1}\right)\right\rceil+1,
\end{align*}
finishing the proof.
\end{proof}

\section{Metropolis-adjusted Diffusion Path Monte Carlo}\label{sec:MDPMC}

After presenting and studying the ideal Diffusion Path Monte Carlo kernel in the previous two sections, we now turn to its implementable counterpart---Metropolis-adjusted Diffusion Path Monte Carlo.
In the present section, we present the method recently introduced in \cite{sifanDP,Hill26}.
On the basis of Metropolis--Hastings' formulation in general state spaces of \cite{Tierney98}, we demonstrate its reversibility in general target probability distributions.
Studying the mixing of the Metropolis-adjusted method is postponed to the subsequent \Cref{sec:MDPMCmix}.

\subsection{Implementing the Noising-Denoising Transition}

In practice, ideal Diffusion Path Monte Carlo is not implementable as exact draws from the denoising kernel $\Pb_t$ are not accessible.
However, by discretizing the denoising equation \eqref{eq:Xb} in a computationally feasible way, one may obtain approximate samples to resort to.
Therein, two aspects need to be resolved:
On the one hand, the continuous time stochastic differential equation needs to be discretized, for which we employ the Euler--Maruyama scheme.
Denoting its step size by $h>0$, we assume $t\in h\mathbb N$.
While $h$ previously denoted the step size of the Proximal Sampler, from now on, $h$ always depicts the Euler--Maruyama step size.
On the other hand, and more importantly, the score function $\nabla\log\mu_s(x):(0,t)\times\R^d\to\R^d$, appearing in the denoising equation, is generally not known exactly.
Instead, it has to be replaced by an \emph{approximate score} $\mathfrak s_s(x):\{kh\}_{1\leq k\leq t/h}\times\R^d\to\R^d$.
In practice, such approximate scores are learned.
For instance, \cite{sifanDP} suggests minimizing a relative entropy objective on diffusion path space using the unnormalized target density, whereas \cite{Hill26} proposes adding noise to samples from short MCMC runs and training a model to remove that noise.
In our theoretical considerations, the approximate score remains a black box.

While the Ornstein--Uhlenbeck noising process could be implemented exactly, we choose to discretize it as well due to reasons becoming apparent later.
The Euler--Maruyama scheme using the approximate score yields the following surrogates for the noising and denoising processes in \eqref{eq:OU} and \eqref{eq:Xb}, respectively.
On the one hand,
\[ \Yf_{k+1}\ =\ (1-\tfrac12h)\Yf_{k}\ +\ h^{1/2}\vec Z_k\quad\text{for $k\geq0$,} \]
where $(\vec Z_k)_{k\geq0}$ is a sequence of independent $\N(0,I_d)$-distributed random variables, independent of $\Yf_0$.
On the other hand,
\[ \Yb_{k+1}\ =\ (1+\tfrac12h)\Yb_{k}\ +\ h\,\mathfrak s_{t-kh}\bigl(\Yb_{k}\bigr)\ +\ h^{1/2}\cev Z_k\quad\text{for $0\leq k<t/h$,} \]
where $\cev Z_0,\dots,\cev Z_{t/h-1}\sim\N(0,I_d)$ are mutually independent and independent of $\Yb_0$.
For $0\leq k<t/h$, the corresponding one-step transition kernels on $\R^d$ are
\begin{equation}\label{eq:Qf}
    \Qf_{k+1}(y,\cdot)\ :=\ \Law\bigl(\Yf_{k+1}\big|\Yf_{k}=y\bigr)\ =\ \N\bigl((1-\tfrac12h)y,hI_d\bigr)
\end{equation}
and
\begin{equation}\label{eq:Qb}
    \Qb_{k+1}(y,\cdot)\ :=\ \Law\bigl(\Yb_{k+1}\big|\Yb_{k}=y\bigr)\ =\ \N\bigl((1+\tfrac12h)y+h\,\mathfrak s_{t-kh}(y),hI_d\bigr).
\end{equation}  
Note that, while $\Qf_{k+1}$ depend on the step size $h$ only, $\Qb_{k+1}$ additionally depend on the noising time $t$ and the score approximation $\mathfrak s$.

To obtain a computational approximation of the ideal Diffusion Path Monte Carlo transition kernel $\pi_{(t)}=\Pf_t\Pb_t$, which concatenate ideal noising and denoising
\[ \Pf_t(x,\cdot)\ =\ \Law\bigl(\Xf_t\big|\Xf_0=x\bigr)\quad\text{and}\quad\Pb_t(x,\cdot)\ =\ \Law\bigl(\Xb^{(t)}_t\big|\Xb^{(t)}_0=x\bigr), \]
we concatenate the approximate noising and denoising
\begin{equation}\label{eq:Qft}
    \Qf_{(t)}(y,\cdot)\ :=\ \Law\bigl(\Yf_{t/h}\big|\Yf_0=y\bigr)\quad\text{and}\quad\Qb_{(t)}(y,\cdot)\ :=\ \Law\bigl(\Yb_{t/h}\big|\Yb_0=y\bigr)
\end{equation}
to $Q_{(t)}:=\Qf_{(t)}\Qb_{(t)}$.

\subsection{The Metropolis--Hastings Method}\label{sec:MH}

Due to numerical discretization and use of an approximate score, the transition kernel $Q_{(t)}$ generally does not leave the target $\mu$ invariant.
The Metropolis--Hastings method \cite{Metropolis53,Hastings70}, a flexible framework to construct Markov transitions that satisfy detailed balance with respect to a given target $\mu$ and hence leave it invariant, allows to correct said bias.

The following general state space formulation is due to Tierney, see \cite{Tierney98}.
Let $\mu$ be a target probability measure and $q$ a proposal Markov transition kernel on a measurable space $(\mathcal E,\mathfrak E)$.
Further, for $x,y\in\mathcal E$, define the acceptance probability
\begin{equation}\label{eq:alpha_gen}
\alpha(x,y)\ =\ \begin{cases}
\min\bigl(\rho(y,x),1\bigr) & \text{if $(x,y)\in\mathcal R$,}\\
0                           & \text{otherwise,}
\end{cases}
\end{equation}
where $\mathcal R$ is the (up to null-sets with respect to both following measures) unique, symmetric, measurable subset of $\mathcal E\times\mathcal E$ such that $\mu(dy)q(y,dx)$ and $\mu(dx)q(x,dy)$ are mutually absolutely continuous in $\mathcal R$ and mutually singular in its complement, and $\rho(y,x)$ denotes a version of the relative density of $\mu(dy)q(y,dx)\big|_{\mathcal R}$ with respect to $\mu(dx)q(x,dy)\big|_{\mathcal R}$ such that $0<\rho(y,x)<\infty$ and $\rho(y,x)^{-1}=\rho(x,y)$ for all $x,y\in\mathcal E$.

With these notions, a transition $X\sim\pi_{\MH}(x,\cdot)$ of Metropolis--Hastings proceeds as follows:
First, a transition to $Y\sim q(x,\cdot)$ is proposed, which is accepted with probability $\alpha(x,Y)$, in which case $X=Y$, and otherwise rejected, in which case the chain remains in $x$, that is, $X=x$.
The transition kernel takes the form
\[ \pi_{\MH}(x,dy)\ =\ \alpha(x,y)\,q(x,dy)\ +\ \int_{\mathcal E}\bigl(1-\alpha(x,z)\bigr)\,q(x,dz)\,\delta_x(dy). \]
As shown in \cite{Tierney98}, the transition kernel $\pi_{\MH}$ satisfies detailed balance with respect to the target $\mu$, i.e.,
\[ \mu(dx)\,\pi_{\MH}(x,dy)\ =\ \mu(dy)\,\pi_{\MH}(y,dx) \]
as measures on $\mathfrak E\otimes\mathfrak E$ and hence leaves $\mu$ invariant.

In principle, the approximate noising-denoising transition kernel $Q_{(t)}$ can directly be Metropolis-adjusted by regarding it as proposal kernel $q$.
However, the resulting acceptance probability is not analytically tractable, and the Metropolis correction hence not computationally feasible.
To see this, for simplicity, assume mutual absolute continuity $\mu\sim\lambda$ between $\mu$ and Lebesgue measure $\lambda$ on $\R^d$.
As by construction, $Q_{(t)}(x,\cdot)\sim\lambda$ for all $x\in\R^d$, it then holds that $\mu(dy)Q_{(t)}(y,dx)\sim\mu(dx)Q_{(t)}(x,dy)$, that is, one may set $\mathcal R=\R^d\times\R^d$.
With the respective Lebesgue densities, the Metropolis acceptance probability reads
\begin{equation}\label{eq:alphaQt}
    \alpha(x,y)\ =\
\min\left(\frac{\mu(y)Q_{(t)}(y,x)}{\mu(x)Q_{(t)}(x,y)},1\right).
\end{equation}
The reason for the acceptance probability not being analytically tractable lies in $Q_{(t)}(y,\cdot)$, arising as endpoint law $\Law\bigl(\Yb_{t/h}\bigr)$ of the noising path $\bigl(\Yf_0,\Yf_1,\dots,\Yf_{t/h}\bigr)$ started in $\Yf_0=y$ followed by the denoising path $\bigl(\Yb_0,\Yb_1,\dots,\Yb_{t/h}\bigr)$ started in the noising path's endpoint $\Yb_0=\Yf_{t/h}$.
From the one-step transition laws \eqref{eq:Qf} and \eqref{eq:Qb}, the law of the entire noising-denoising path $\hat y=\bigl(\hat y_0,\hat y_1,\dots,\hat y_{t/h},\dots,\hat y_{2t/h}\bigr)$ in $\R^{d(2t/h+1)}$, given $\Yf_0=y$, takes the form
\begin{equation}\label{eq:Q}
    Q(y,d\hat y)\ :=\ \delta_y(d\hat y_0)\left(\prod_{k=0}^{t/h-1}\Qf_{k+1}(\hat y_k,d\hat y_{k+1})\right)\left(\prod_{k=0}^{t/h-1}\Qb_{k+1}(\hat y_{t/h+k},d\hat y_{t/h+k+1})\right).
\end{equation}
The endpoint law emerges from integrating out the intermediate states $\hat y_0,\dots,\hat y_{2t/h-1}$ along the path initialized in $y$, that is,
\begin{equation}\label{eq:Qt}
    Q_{(t)}(y,\cdot)\ =\ \int_{\R^d}\dots\int_{\R^d}Q\bigl(y,d\hat y_0,\dots,d\hat y_{2t/h-1},\,\cdot\,\bigr).
\end{equation}
However, the integrals with respect to the denoising kernels $\Qb_{k+1}$, which involve the approximate score function $\mathfrak s$, are generally not analytically solvable.
This bars us from exactly evaluating the Metropolis acceptance probability and hence implementing the Metropolis--Hastings transitions with proposal $Q_{(t)}$.

\subsection{Metropolis-adjusted Diffusion Path Monte Carlo}

To obtain a computationally feasible Metropolis-adjustment of $Q_{(t)}$, instead of regarding itself as proposal kernel within Metropolis--Hastings, \cite{sifanDP,Hill26} interpret the dynamics as transitions on the space $\R^{d(N+1)}$ of noising-denoising paths $\hat y=\bigl(\hat y_0,\hat y_1,\dots,\hat y_{t/h},\dots,\hat y_N\bigr)$, where we set $N=2t/h$.
Then, the path law $Q(y,\cdot)$ given the initial state $y$, see \eqref{eq:Q}, can be understood as Markov transition kernel $\hat Q(\hat y,\cdot):=Q(\hat y_0,\cdot)$ on path space that resamples a noising-denoising path with the same initial state as $\hat y$.
Combined with the \emph{path reversal involution}
\[ R\hat y\ =\ R(\hat y_0,\hat y_1,\dots,\hat y_N)\ :=\ (\hat y_N,\hat y_{N-1},\dots,\hat y_0), \]
the transition $Q_{(t)}(y,\cdot)$ can be realized as pushforward $(p\circ R)_\#Q(y,\cdot)$, where
\begin{equation}\label{eq:p}
p:\R^{d(N+1)}\to\R^d,\quad p(\hat y) = \hat y_0
\end{equation}
denotes the \emph{projection onto the initial state}.
For a Markov transition kernel $\hat\pi(\hat y,\cdot)$ on path space that only depends on $\hat y$ through $p(\hat y)$, the \emph{projected Markov kernel} on $\R^d$ is defined by
\begin{equation}\label{eq:proj}
    (p_\#\hat\pi)(y,B)\ :=\ \hat\pi\bigl(\hat y,p^{-1}(B)\bigr)\quad\text{with any $\hat y\in p^{-1}(y)$}
\end{equation}
for all $y\in\R^d$ and Borel $B\subseteq\R^d$.
Writing $\hat R(\hat y,\cdot):=\delta_{R\hat y}$ for the transition kernel induced by path-reversal, we have
\[ Q_{(t)}\ =\ p_\#(\hat Q\hat R)\ =\ p_\#(\hat Q\hat R\hat Q), \]
where the second equality uses that an additional transition along $\hat Q$ leaves the initial state invariant.
The point in regarding $Q_{(t)}$ as projection of $\hat Q\hat R\hat Q$ on path space lies in the fact that, for a projected transition kernel $p_\#\hat\pi$ on $\R^d$ to be reversible with respect to $\mu$, it suffices for $\hat\pi$ to be reversible with respect to the lifted target
\[ \hat\mu\ :=\ \mu Q\ =\ \int_{\R^d}\mu(dy)\,Q(y,\cdot) \]
on path space, for which $p_\#\hat\mu=\mu$.
Crucially, the deterministic path reversal $\hat R$ can be Metropolis-adjusted with computationally feasible acceptance probabilities to a $\hat\mu$-reversible kernel $\hat R_{\MH}$.
Palindromic combination with $\hat Q$, which is itself $\hat\mu$-reversible, to $\hat Q\hat R_{\MH}\hat Q$ remains reversible with respect to $\hat\mu$.
Therefore, its projection $\Pi_{(t)}:=p_\#(\hat Q\hat R_{\MH}\hat Q)$, defining the transition kernel of \emph{Metropolis-adjusted Diffusion Path Monte Carlo} is reversible with respect to $\mu$, serving as computationally feasible Metropolis-adjustment of $Q_{(t)}$.
The following proposition makes this rigorous for general target probability distributions $\mu$ on $\R^d$.

In order to define the acceptance probabilities for general target distributions, note that, by the Lebesgue decomposition and Radon--Nikodym theorems, see \cite{Rudin87} for reference, there exist unique sub-probability measures $\mu_a$ and $\mu_s$ on $\R^d$ such that $\mu_a\ll\lambda$, $\mu_s\perp\lambda$, and
\[ \mu(dy)\ =\ \mu_a(dy)\ +\ \mu_s(dy)\ =\ \mu_a(y)\,dy\ +\ \mu_s(dy), \]
where we denote by $\mu_a$ both the measure as well as its Lebesgue density.
In particular, there exists a Lebesgue null set $S\subset\R^d$ such that $\mu_s(S^\comp)=0$.
The decomposition of $\mu$ yields the decomposition 
\[ \hat\mu(d\hat y)\ =\ \hat\mu_a(d\hat y)\ +\ \hat\mu_s(d\hat y)\ =\ \hat\mu_a(\hat y)\,d\hat y\ +\ \hat\mu_s(d\hat y) \]
into $\hat\mu_a:=\mu_aQ$, which is absolutely continuous with respect to Lebesgue measure on path space, and $\hat\mu_s:=\mu_sQ$.

\begin{proposition}\label{prop:rev}
Assume the transition laws $\Qf_{k+1}(y,\cdot)$ and $\Qb_{k+1}(y,\cdot)$ in \eqref{eq:Q} to be absolutely continuous with respect to Lebesgue measure for all $0\leq k<t/h$ and $y\in\R^d$.
Then, for any $\mu\in\calP(\R^d)$, the transition kernel on path space $\hat Q\hat R_{\MH}\hat Q$ is reversible with respect to $\hat\mu$, where
\[ \hat R_{\MH}(\hat y,\cdot)\ :=\ \alpha(\hat y,R\hat y)\,\delta_{R\hat y}\ +\ \bigl(1-\alpha(\hat y,R\hat y)\bigr)\,\delta_{\hat y} \]
is the Metropolis-adjusted path reversal with acceptance probability
\begin{equation}\label{eq:alphaprop}
\alpha(\hat y,R\hat y)\ =\
\begin{cases}
\min\left(\frac{\hat\mu_a\circ R(\hat y)}{\hat\mu_a(\hat y)},1\right) & \text{if $\hat\mu_a(\hat y)>0$ and $\hat y_0,\hat y_N\in S^\comp$,} \\
1 & \text{if $\hat\mu_a(\hat y)=0$ and $\hat y_0,\hat y_N\in S^\comp$,} \\
0 & \text{otherwise.}
\end{cases}
\end{equation}
In particular, the projected kernel $p_\#(\hat Q\hat R_{\MH}\hat Q)$ on $\R^d$ is reversible with respect to $\mu$.
\end{proposition}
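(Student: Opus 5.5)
The plan is to prove reversibility of each factor of the palindrome $\hat Q\hat R_{\MH}\hat Q$ with respect to $\hat\mu$ separately, then combine them, then project. Throughout I will read reversibility of a kernel $\hat\pi$ as self-adjointness on $L^2(\hat\mu)$, i.e.\ $\int g\,\hat\pi f\,d\hat\mu=\int f\,\hat\pi g\,d\hat\mu$ for bounded measurable $f,g$. In this form the composition step is immediate: if $\hat Q$ and $\hat R_{\MH}$ are self-adjoint, then $(\hat Q\hat R_{\MH}\hat Q)^\ast=\hat Q^\ast\hat R_{\MH}^\ast\hat Q^\ast=\hat Q\hat R_{\MH}\hat Q$.

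\textbf{Step 1: $\hat Q$.} Since $\hat Q(\hat y,\cdot)=Q(\hat y_0,\cdot)$ and $Q(y,\cdot)$ has initial coordinate $y$, we get
\[ \hat\mu(d\hat y)\,\hat Q(\hat y,d\hat z)\ =\ \int_{\R^d}\mu(dy)\,Q(y,d\hat y)\,Q(y,d\hat z). \]
The right hand side is symmetric in $(\hat y,\hat z)$, which gives detailed balance for $\hat Q$.

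\textbf{Step 2: $\hat R_{\MH}$.} Rather than invoking Tierney's construction abstractly, I will verify detailed balance by hand. For bounded measurable $F$ on path space $\times$ path space, it suffices to show
\[ \int F(\hat y,R\hat y)\,\alpha(\hat y,R\hat y)\,\hat\mu(d\hat y)\ =\ \int F(R\hat y,\hat y)\,\alpha(\hat y,R\hat y)\,\hat\mu(d\hat y). \]
The rejection part $\bigl(1-\alpha\bigr)\delta_{\hat y}$ sits on the diagonal and is trivially symmetric, so only the accepted part needs checking. To do so, split path space into the $R$-invariant set $A:=\{\hat y_0,\hat y_N\in S^\comp\}$ and its complement.
\begin{itemize}
\item \emph{Complement of $A$.} Here $\alpha=0$, so this region contributes nothing.
\item \emph{Inside $A$, $\hat\mu=\hat\mu_a$.} The singular part $\hat\mu_s=\mu_sQ$ is concentrated on $\{\hat y_0\in S\}\subseteq A^\comp$, so $\hat\mu|_A=\hat\mu_a|_A$.
\item \emph{Change of variables.} $R$ is a linear isometry preserving Lebesgue measure on $\R^{d(N+1)}$, so the substitution $\hat y\mapsto R\hat y$ turns the right hand side into $\int_A F(\hat y,R\hat y)\,\alpha(R\hat y,\hat y)\,\hat\mu_a(R\hat y)\,d\hat y$.
\end{itemize}
It then remains to check the pointwise identity
\[ \alpha(\hat y,R\hat y)\,\hat\mu_a(\hat y)\ =\ \alpha(R\hat y,\hat y)\,\hat\mu_a(R\hat y)\quad\text{on $A$.} \]
\begin{itemize}
\item If both densities are positive, both sides equal $\min\bigl(\hat\mu_a(\hat y),\hat\mu_a(R\hat y)\bigr)$.
\item If exactly one density vanishes, say $\hat\mu_a(\hat y)=0<\hat\mu_a(R\hat y)$, then $\alpha(R\hat y,\hat y)=\min(0,1)=0$, so both sides vanish.
\item If both vanish, the identity is trivial.
\end{itemize}

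\textbf{Step 3: projection.} Let $\hat\pi$ be a $\hat\mu$-reversible kernel depending on $\hat y$ only through $\hat y_0$. Then
\[ \mu(dy)\,(p_\#\hat\pi)(y,dy') \]
is the image of $\hat\mu(d\hat y)\,\hat\pi(\hat y,d\hat z)$ under the map $(\hat y,\hat z)\mapsto(\hat y_0,\hat z_0)$. This uses $p_\#\hat\mu=\mu$ together with the fact that $\hat\pi(\hat y,\cdot)$ is a function of $p(\hat y)$, and it is what makes $p_\#\hat\pi$ well defined. Since the joint measure is symmetric and the map is symmetric, the image is symmetric, which is reversibility of $p_\#\hat\pi$ with respect to $\mu$. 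The kernel $\hat Q\hat R_{\MH}\hat Q$ qualifies, because its first factor $\hat Q$ depends only on $\hat y_0$.

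\textbf{Main obstacle.} I expect the hard part to be the null-set bookkeeping for a general $\mu$. Specifically, I need that $\hat\mu_s$ is genuinely concentrated on $\{\hat y_0\in S\}$ and never charges $A$. This follows from $\hat\mu_s(\{\hat y_0\in S^\comp\})=\mu_s(S^\comp)=0$. The assumption that the $\Qf_{k+1}$ and $\Qb_{k+1}$ are absolutely continuous is needed so that $\hat\mu_a=\mu_aQ$ really has a Lebesgue density on path space, making the change-of-variables step legitimate. It also ensures that no mass of either $\hat\mu_a$ or $\hat\mu_s$ sits on $\{\hat y_N\in S\}$, which explains why excluding that set via $\alpha=0$ costs nothing.
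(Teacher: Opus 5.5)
Your proof is correct, and its outer structure matches the paper's. Reversibility of $\hat Q$ comes from the symmetry of $\int\mu(dy)\,Q(y,d\hat y)\,Q(y,d\hat z)$, the palindrome is combined, and the result is projected. The two arguments differ in the key step, the reversibility of $\hat R_{\MH}$.

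The paper specializes Tierney's general state space Metropolis--Hastings construction to the deterministic proposal $\delta_{R\hat y}$. It computes the density $h$ of $\hat\mu$ with respect to $\hat\mu+R_\#\hat\mu$ and reads off the set $\mathcal R$ and the ratio $\rho(R\hat y,\hat y)=h\circ R(\hat y)/h(\hat y)$. This gives Tierney's acceptance probability, which vanishes where $\hat\mu_a(\hat y)=0$. The paper then redefines it to be $1$ on the $\hat\mu$-null set where $\hat\mu_a(\hat y)=0$ and $\hat y_0,\hat y_N\in S^\comp$.

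You instead verify detailed balance by hand. You restrict to the $R$-invariant set $A$, on which $\hat\mu$ has Lebesgue density $\hat\mu_a$, and use that $R$ preserves Lebesgue measure. You then check the pointwise identity $\alpha(\hat y,R\hat y)\,\hat\mu_a(\hat y)=\alpha(R\hat y,\hat y)\,\hat\mu_a(R\hat y)$ by cases.

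Your argument is self-contained and does not appeal to Tierney's characterization of $\mathcal R$ and $\rho$. It also covers the value $1$ on $\{\hat\mu_a=0\}$ directly, so no separate null-set modification is needed. The paper's argument, in turn, identifies $\alpha$ as the canonical Metropolis--Hastings acceptance probability for this involutive proposal. It also explains structurally why paths with an endpoint in $S$ must be rejected: there $\hat\mu$ and $R_\#\hat\mu$ are mutually singular.

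Your Step 3 also gives an explicit proof of the projection claim, which the paper only asserts in the discussion before the proposition.
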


Note that if the transition laws $\Qf_{k+1}$ and $\Qb_{k+1}$ are not only absolutely continuous but mutually absolutely continuous with respect to Lebesgue measure, $\hat\mu_a(\hat y)>0$ if and only if $\mu_a(\hat y_0)>0$.

We are now ready to formally define Metropolis-adjusted Diffusion Path Monte Carlo.

\begin{definition}\label{def:MDPMC}
The transition kernel of \emph{Metropolis-adjusted Diffusion Path Monte Carlo} with step size $h>0$, noising time $t\in h\mathbb N$, and approximate score $\mathfrak s:\{kh\}_{1\leq k\leq t/h}\times\R^d\to\R^d$ is defined by
\[ \Pi_{(t)}\ :=\ p_\#\bigl(\hat Q\hat R_{\MH}\hat Q\bigr), \]
where $\hat Q(\hat y,\cdot)=Q(\hat y_0,\cdot)$ with $Q$ as in \eqref{eq:Q} consisting of the approximate Ornstein--Uhlenbeck noising and denoising kernels \eqref{eq:Qf} and \eqref{eq:Qb}, and $\hat R_{\MH}$ as in \Cref{prop:rev}.
\end{definition}

Since the approximate Ornstein--Uhlenbeck noising and denoising kernels are absolutely continuous with respect to Lebesgue measure, $\Pi_{(t)}$ is reversible with respect to $\mu$ by the proposition.
Although reversibility remains to hold beyond the Ornstein--Uhlenbeck setting, for simplicity, we restrict our considerations thereto.
Further, while the transition kernel, beyond the noising time $t$, depends also on the step size $h$ and approximate score $\mathfrak s$, we omit them from the notation since our analysis primarily concerns $t$.

Note that, due to the absolute continuity of $\Qf_{k+1}$ and $\Qb_{k+1}$ with respect to Lebesgue measure assumed in the proposition, $\hat\mu(\{\hat y_N\in S\})=0$, that is, the noising-denoising path does almost surely not end in $S$.
Therefore, under this assumption, Metropolis-adjusted Diffusion Path Monte Carlo is not irreducible with respect to target distributions $\mu$ that are not absolutely continuous with respect to Lebesgue measure.
This corresponds to a well-known obstruction in the diffusion model literature: discretized denoising schemes with absolutely continuous transition kernels cannot converge to a target distribution with a nonzero singular component in total variation.
For targets supported on lower-dimensional manifolds, this motivates convergence guarantees in Wasserstein distance instead, see \cite{Debortoli22}.
Regarding paths starting or ending in $S$, reversibility only requires the acceptance probability to vanish if $\hat y_0\in S$ and $\hat y_N\notin S$.
On the $\hat\mu$-null set $\{\hat y_N\in S\}$, one can define it arbitrary without affecting reversibility.

If $\mu(dx)=\mu(x)\,dx$ is absolutely continuous with respect to Lebesgue measure, it holds $\mu_a=\mu$ and $S=\emptyset$ so that the acceptance probability \eqref{eq:alphaprop} simplifies to
\begin{equation}
\alpha(\hat y,R\hat y)\ =\
\begin{cases}
\min\left(\frac{\hat\mu\circ R(\hat y)}{\hat\mu(\hat y)},1\right) & \text{if $\hat\mu(\hat y)>0$,} \\
1 & \text{otherwise.} \\
\end{cases}
\end{equation}
With the convention that $\min(\frac{r}{0},1)=1$ for all $r\geq0$, this recovers the acceptance probability given in \cite{sifanDP}, where reversibility of Metropolis-adjusted Diffusion Path Monte Carlo with respect to $\mu$ is shown via a direct computation.
With $N=2t/h$, the quotient in the minimum reads
\[ \frac{\hat\mu\circ R(\hat y)}{\hat\mu(\hat y)}\ =\ \frac{\mu(\hat y_N)}{\mu(\hat y_0)}\frac{\prod_{k=0}^{\frac N2-1}\Qf_{k+1}\bigl(\hat y_{N-k},\hat y_{N-k-1}\bigr)}{\prod_{k=0}^{\frac N2-1}\Qf_{k+1}\bigl(\hat y_k,\hat y_{k+1}\bigr)}\frac{\prod_{k=0}^{\frac N2-1}\Qb_{k+1}\bigl(\hat y_{\frac N2-k},\hat y_{\frac N2-k-1}\bigr)}{\prod_{k=0}^{\frac N2-1}\Qb_{k+1}\bigl(\hat y_{\frac N2+k},\hat y_{\frac N2+k+1}\bigr)}. \]
Assuming the unnormalized densities of $\mu$ and all $\Qf_{k+1}$ and $\Qb_{k+1}$ to be accessible, which they are for the approximate Ornstein--Uhlenbeck noising and denoising kernels, evaluation of this acceptance probability is straightforward.

\begin{proof}[Proof of \Cref{prop:rev}]
First, note that $\hat Q$ is reversible with respect to $\hat\mu$ since
\[ \hat\mu(d\hat x)\hat Q(\hat x,d\hat y)\ =\ \int_{\R^d}\mu(dz)Q(z,d\hat x)Q(\hat x_0,d\hat y)\ =\ \int_{\R^d}\mu(dz)Q(z,d\hat x)Q(z,d\hat y) \]
is symmetric in $d\hat x$ and $d\hat y$, where we used that the first $Q$ involves $\delta_z(d\hat x_0)$.
Therefore, the palindromic $\hat Q\hat R_{\MH}\hat Q$ is reversible if $\hat R_{\MH}$ is.

The reversibility of $\hat R_{\MH}$ follows directly from the general state space Metropolis--Hastings construction reviewed in \Cref{sec:MH}, specified to deterministic proposals along involutions.
Specifically, consider the proposal kernel $q(\hat y,\cdot)=\delta_{R\hat y}$ on path space and note that $\hat\mu$ is absolutely continuous with respect to $\hat\mu+R_\#\hat\mu$ with density
\[ h(\hat y)\ =\
\begin{cases}
\frac{\hat\mu_a(\hat y)}{\hat\mu_a(\hat y)+\hat\mu_a\circ R(\hat y)} & \text{if $\hat\mu_a(\hat y)>0$ and $\hat y_0,\hat y_N\in S^\comp$,}\\
1 & \text{if $\hat y_0\in S$,}\\
0 & \text{otherwise.}
\end{cases} \]
As detailed in \cite{Tierney98}, with
\[ A\ =\ \bigl\{\hat y\in\R^{d(N+1)}:\hat\mu_a(\hat y),\hat\mu_a\circ R(\hat y)>0\text{ and }\hat y_0,\hat y_N\in S^\comp\bigr\}, \]
it then holds $\mathcal R=\{(\hat y,R\hat y):\hat y\in A\}$ and $\rho(R\hat y,\hat y)=h\circ R(\hat y)/h(\hat y)$ for $\hat y\in A$.
Inserting this into the acceptance probability \eqref{eq:alpha_gen}, which guarantees reversibility of the Metropolis--Hastings kernel $\hat R_{\MH}$, yields
\[ \alpha(\hat y,R\hat y)\ =\
\begin{cases}
\min\left(\frac{\hat\mu_a\circ R(\hat y)}{\hat\mu_a(\hat y)},1\right) & \text{if $\hat\mu_a(\hat y)>0$ and $\hat y_0,\hat y_N\in S^\comp$,} \\
0 & \text{otherwise.}
\end{cases} \]
Redefining the acceptance probability to be one if $\hat\mu_a(\hat y)=0$ and $\hat y_0,\hat y_N\in S^\comp$, which is a $\hat\mu$-null set and hence does not influence reversibility with respect to $\hat\mu$, results in the acceptance probability \eqref{eq:alphaprop}.
\end{proof}

\section{Mixing of Metropolis-adjusted Diffusion Path Monte Carlo}\label{sec:MDPMCmix}

After presenting Metropolis-adjusted Diffusion Path Monte Carlo in the previous section, we now proceed by studying its mixing.
Therefore, we apply the framework to local mixing via Dobrushin contraction developed in \Cref{sec:locDobmix}, which we already used in the analysis of ideal Diffusion Path Monte Carlo and the Proximal Sampler in \Cref{sec:DPMCmix}, and which seamlessly generalizes to the Metropolis-adjusted method.
This yields the, to the best of our knowledge, \emph{first convergence analysis} of Metropolis-adjusted Diffusion Path Monte Carlo.

\begin{theorem}\label{thm:MDPMCmix}
Let $\Pi_{(t)}$ be the transition kernel of Metropolis-adjusted Diffusion Path Monte Carlo with step size $h<2$, noising time $t\in h\mathbb N$, and invariant distribution $\mu\in\calP(\R^d)$, and let $D\subseteq\R^d$.
Then, it holds for all initial distributions $\nu\in\calP(\R^d)$ and $n\geq0$ that
\begin{equation}\label{eq:MDPMCmix}
    \TV\bigl(\nu\Pi_{(t)}^n,\mu\bigr)\ \leq\ \left(\frac{\diam(D)}{2\sqrt{e^t-1}}+2\,\mathfrak P_r(D)\right)^n+\ \P_{\nu}(T<n)\ +\ \P_{\mu}(T<n),
\end{equation}
where
\[ \mathfrak P_r(D) \ :=\ \sup_{x\in D}\int\bigl(1-\alpha(\hat y,R\hat y)\bigr)Q(x,d\hat y) \]
denotes the largest Metropolis rejection probability in $D$, and $T$ the first exit time of Metropolis-adjusted Diffusion Path Monte Carlo from $D$.
\end{theorem}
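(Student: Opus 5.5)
By \Cref{cor:mixlocDob}, applied with $\pi=\Pi_{(t)}$ (which leaves $\mu$ invariant by \Cref{prop:rev}), it suffices to establish the local Dobrushin contraction
\[ \sup_{x,\tilde x\in D}\TV\bigl(\Pi_{(t)}(x,\cdot),\Pi_{(t)}(\tilde x,\cdot)\bigr)\ \leq\ \frac{\diam(D)}{2\sqrt{e^t-1}}+2\,\mathfrak P_r(D). \]
The exit terms then appear verbatim. The plan is to compare $\Pi_{(t)}(x,\cdot)$ with an auxiliary kernel that has no rejections, and to compute that kernel's contraction through its noising part, as in \eqref{eq:locDob_DPMC}.

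\textbf{Step 1: remove the rejections.} A transition from $x$ draws $\hat y\sim Q(x,\cdot)$. With probability $\alpha(\hat y,R\hat y)$ it accepts and moves to the path $R\hat y$, which is then resampled by $\hat Q$ from its initial state $(R\hat y)_0=\hat y_N$. Otherwise it stays at $\hat y$, whose resampling keeps the initial state $x$. Projecting onto the initial state gives
\[ \Pi_{(t)}(x,\cdot)\ =\ \int\alpha(\hat y,R\hat y)\,\delta_{\hat y_N}\,Q(x,d\hat y)\ +\ \Bigl(\int\bigl(1-\alpha(\hat y,R\hat y)\bigr)Q(x,d\hat y)\Bigr)\delta_x. \]
Here $\int\delta_{\hat y_N}Q(x,d\hat y)=Q_{(t)}(x,\cdot)$. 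Using a coupling that agrees whenever the move is accepted, we get $\TV(\Pi_{(t)}(x,\cdot),Q_{(t)}(x,\cdot))\leq\int(1-\alpha)\,dQ(x,\cdot)\leq\mathfrak P_r(D)$ for $x\in D$. The triangle inequality then yields
\[ \TV\bigl(\Pi_{(t)}(x,\cdot),\Pi_{(t)}(\tilde x,\cdot)\bigr)\ \leq\ \TV\bigl(Q_{(t)}(x,\cdot),Q_{(t)}(\tilde x,\cdot)\bigr)+2\,\mathfrak P_r(D). \]

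\textbf{Step 2: contraction of $Q_{(t)}=\Qf_{(t)}\Qb_{(t)}$.} Total variation is non-expansive under the Markov kernel $\Qb_{(t)}$, so it suffices to bound $\TV(\Qf_{(t)}(x,\cdot),\Qf_{(t)}(\tilde x,\cdot))$. This removes the score approximation from the problem entirely. Iterating \eqref{eq:Qf} shows
\[ \Qf_{(t)}(x,\cdot)\ =\ \N\bigl(a^{m}x,\ \sigma^2I_d\bigr),\qquad a=1-\tfrac h2,\quad m=t/h,\quad \sigma^2=h\,\frac{1-a^{2m}}{1-a^2}. \]
For $h<2$ we have $a\in(0,1)$. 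By \eqref{eq:TV_Pf} (Barsov's formula, with the linear bound), this total variation is at most
\[ \frac{a^m|x-\tilde x|}{\sqrt{2\pi}\,\sigma}. \]

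\textbf{Main obstacle.} The remaining task is the elementary but slightly fiddly comparison showing that the discretized signal-to-noise ratio is no worse than the continuous one:
\[ \frac{a^{2m}}{\sigma^2}\ \leq\ \frac{1}{e^t-1},\quad\text{i.e.}\quad (1-a^2)\,(a^{-2m}-1)\ \geq\ h\,(e^t-1). \]
I would write $1-a^2=h(1-h/4)$, so the claim reduces to
\[ (1-\tfrac h4)\bigl((1-\tfrac h2)^{-2t/h}-1\bigr)\ \geq\ e^t-1. \]
The approach is to use $-\log(1-u)\geq u+u^2/2$ with $u=h/2$, which gives
\[ (1-\tfrac h2)^{-2t/h}\ \geq\ e^{t(1+h/4)}. \]
It then remains to check $(1-\frac h4)(e^{t(1+h/4)}-1)\geq e^t-1$. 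This holds since $e^{th/4}\geq1+th/4$ and $(1-h/4)(1+\dots)$ can be compared directly, possibly with separate handling of small $t$. If this comparison fails in some regime, I would fall back on a slightly weaker constant. Combined with $\sqrt{2\pi}\geq2$, this gives the bound $\diam(D)/(2\sqrt{e^t-1})$, and inserting it into \Cref{cor:mixlocDob} proves \eqref{eq:MDPMCmix}.
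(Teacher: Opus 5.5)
Your overall route matches the paper's. You reduce to local Dobrushin contraction via \Cref{cor:mixlocDob}, discard $\Qb_{(t)}$ because total variation is non-expansive, and evaluate the Gaussian noising kernels with Barsov's formula. Your Step~1 is the triangle-inequality shortcut that the paper records in the remark after its proof. The paper's main proof instead couples both Metropolis steps jointly: the endpoints $(\hat X_N,\hat Y_N)$ are maximally coupled and one shared uniform decides acceptance. That gives the sharper mutual-acceptance term of \Cref{rem:weakerMet}, but your version suffices for the stated bound.

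The step you call the main obstacle, however, comes from an algebra slip. Since $\sigma^2=h(1-a^{2m})/(1-a^2)$, you have $a^{2m}/\sigma^2=(1-a^2)/\bigl(h(a^{-2m}-1)\bigr)=(1-\tfrac h4)/(a^{-2m}-1)$. So the required inequality is $(1-\tfrac h4)(e^t-1)\leq(1-\tfrac h2)^{-2t/h}-1$, with the factor $1-\tfrac h4\leq1$ working in your favour. It follows at once from $1-\tfrac h2\leq e^{-h/2}$, i.e.\ $(1-\tfrac h2)^{-2t/h}\geq e^t$, which is exactly the paper's one-line argument.

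The inequality you wrote, $(1-\tfrac h4)\bigl((1-\tfrac h2)^{-2t/h}-1\bigr)\geq e^t-1$, is not equivalent to what is needed; it is strictly stronger. After your relaxation to $e^{t(1+h/4)}$ it even fails for small $t$, because $(1-\tfrac h4)(1+\tfrac h4)<1$. It can be rescued using $t\geq h$, which $t\in h\mathbb N$ guarantees. As submitted, though, the step is left unverified, and falling back on a weaker constant would not prove the theorem as stated. With the correct reduction no case analysis is needed.
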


Under suitable control over rejection and exit probabilities, the theorem yields local geometric mixing for sufficiently large noising times.

\begin{corollary}\label{cor:MDPMCmix}
Consider Metropolis-adjusted Diffusion Path Monte Carlo with step size $h<2$, noising time $t\in h\mathbb N$, and invariant distribution $\mu\in\calP(\R^d)$.
Let $\eps>0$ and $\nu\in\calP(\R^d)$.
If there exists $D\subseteq\R^d$ such that
\[ \mathfrak P_r(D)\ <\ \frac12\quad\text{and}\quad\P_{\eta}\Bigl(T<2\bigl(1-2\,\mathfrak P_r(D)\bigr)^{-1}\log(3\eps^{-1})\Bigr)\ \leq\ \frac\eps3\quad\text{for $\eta\in\{\nu,\mu\}$,} \]
where $\mathfrak P_r(D)$ and $T$ are as in \Cref{thm:MDPMCmix}, and the noising time satisfies
\[ t\ \geq\ \log\Bigl(\bigl(1-2\,\mathfrak P_r(D)\bigr)^{-2}\diam(D)^2+1\Bigr), \]
then the mixing time to accuracy $\eps$ from start in $\nu$ is
\[ \tmix(\eps,\nu)\ \leq\ \Bigl\lceil2\bigl(1-2\,\mathfrak P_r(D)\bigr)^{-1}\log(3\eps^{-1})\Bigr\rceil. \]
\end{corollary}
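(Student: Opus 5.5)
This corollary should follow from \Cref{thm:MDPMCmix} by choosing the number of transitions appropriately. We bound each of the three terms on the right hand side of \eqref{eq:MDPMCmix} by $\eps/3$ and then use that total variation to equilibrium is non-increasing.

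Write $p:=\mathfrak P_r(D)<\frac12$ and $\kappa:=1-2p\in(0,1]$, and set $n:=\lceil 2\kappa^{-1}\log(3\eps^{-1})\rceil$.

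\textbf{Contraction term.} The assumption on $t$ gives $e^t-1\geq\kappa^{-2}\diam(D)^2$, hence
\[ \frac{\diam(D)}{2\sqrt{e^t-1}}\ \leq\ \frac{\kappa}{2}. \]
The contraction coefficient in \eqref{eq:MDPMCmix} is therefore at most
\[ \frac\kappa2+2p\ =\ \frac\kappa2+1-\kappa\ =\ 1-\frac\kappa2. \]
Using $1-x\leq e^{-x}$, we get
\[ \Bigl(1-\frac\kappa2\Bigr)^n\ \leq\ \exp\Bigl(-\frac{\kappa n}{2}\Bigr)\ \leq\ \exp\bigl(-\log(3\eps^{-1})\bigr)\ =\ \frac\eps3. \]

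\textbf{Exit terms.} The hypothesis controls $\P_\eta(T<m)$ with $m=2\kappa^{-1}\log(3\eps^{-1})$, which need not be an integer. Since $T$ is integer-valued, the events $\{T<n\}$ and $\{T<m\}$ coincide when $n=\lceil m\rceil$. For integer $T$, $T<\lceil m\rceil$ is equivalent to $T\leq\lceil m\rceil-1$, which implies $T<m$ because $\lceil m\rceil-1<m$. The reverse implication $T<m\Rightarrow T<\lceil m\rceil$ is trivial. Hence $\P_\eta(T<n)=\P_\eta(T<m)\leq\eps/3$ for both $\eta=\nu$ and $\eta=\mu$.

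\textbf{Conclusion.} Summing the three bounds in \eqref{eq:MDPMCmix} gives $\TV(\nu\Pi_{(t)}^n,\mu)\leq\eps$, so $\tmix(\eps,\nu)\leq n$ by the definition of mixing time.

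The only point needing care is the rounding step for the non-integer horizon $m$; the rest is a direct substitution into \Cref{thm:MDPMCmix}.
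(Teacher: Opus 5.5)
Your proposal is correct and is exactly the direct substitution into \Cref{thm:MDPMCmix} that the paper intends; the paper gives no separate proof. The condition on $t$ caps the contraction coefficient at $1-\tfrac12\bigl(1-2\,\mathfrak P_r(D)\bigr)$, and your observation that $\{T<\lceil m\rceil\}=\{T<m\}$ for integer-valued $T$ correctly transfers the exit hypothesis to $n=\lceil m\rceil$. The only tacit point is $\eps\leq 3$, needed so that $n\geq0$ and the theorem applies; this is harmless since only $\eps<1$ is of interest.
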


\Cref{thm:MDPMCmix,cor:MDPMCmix} provide a seamless generalization of the Dobrushin contraction based local mixing analysis from ideal to Metropolis-adjusted Diffusion Path Monte Carlo.
Notably, the results guarantee that, once a domain $D$ in which the dynamics are suitably stable is fixed, mixing of the Metropolis-adjusted method proceeds similarly to the ideal one, if the Metropolis rejection probabilities are controlled uniformly in $D$.
As uncontrolled rejection probabilities along typical trajectories may yield mixing bottlenecks, this assertion is natural.

It is worth emphasizing the scope of the results, admitting \emph{any target and initial probability distributions}.

\begin{remark}[Weaker Metropolis Assumption]\label{rem:weakerMet}
The assumption that rejection probabilities are locally controlled below $1/2$, instead of away from $1$, is not necessary.
Rather, it is a convenient assumption making the ensuing statements especially simple.
Specifically, to establish Dobrushin contraction of $\Pi_{(t)}$, the proof shows that for all $x,y\in D$,
\[ \TV\bigl(\Pi_{(t)}(x,\cdot),\Pi_{(t)}(y,\cdot)\bigr)\ \leq\ \TV\bigl(Q_{(t)}(x,\cdot),Q_{(t)}(y,\cdot)\bigr)\ +\ 2\,\mathfrak P_r(D), \]
based on which contraction requires $\mathfrak P_r(D)<1/2$.
Notably, the previous display separates the contraction of $Q_{(t)}$ from the effect of Metropolis adjustment, leading to a particularly clean statement.

In fact, however, the proof below establishes the stronger estimate
\[ \TV\bigl(\Pi_{(t)}(x,\cdot),\Pi_{(t)}(y,\cdot)\bigr) \leq \TV\bigl(Q_{(t)}(x,\cdot),Q_{(t)}(y,\cdot)\bigr) + \E\Bigl(1-\min\bigl(\alpha(\hat X,R\hat X),\alpha(\hat Y,R\hat Y)\bigr)\Bigr), \]
where $\bigl(\hat X,\hat Y\bigr)$ is a coupling of $Q(x,\cdot)$ and $Q(y,\cdot)$ such that $\bigl(\hat X_N,\hat Y_N\bigr)$ maximally couples $Q_{(t)}(x,\cdot)$ and $Q_{(t)}(y,\cdot)$.
The second term on the right hand side represents the counter probability of mutual acceptance and is bounded above by $2\,\mathfrak P_r(D)$.
In particular, the theorem's assertion \eqref{eq:MDPMCmix} remains to hold with this term replacing $2\,\mathfrak P_r(D)$.
Consequently,
\[ \E\Bigl(1-\min\bigl(\alpha(\hat X,R\hat X),\alpha(\hat Y,R\hat Y)\bigr)\Bigr)\ <\ 1 \]
uniformly for all $x,y\in D$ suffices for the required local Dobrushin contraction of $\Pi_{(t)}$, relaxing the assumption $2\,\mathfrak P_r(D)<1$ made in the theorem.
\end{remark}

\begin{remark}[Explicit Discretization and Score Approximation Errors]
The discretization and score approximation errors enter our results implicitly through Metropolis rejection probabilities.
While the explicit study of these errors lies beyond the scope of this work, there is a growing body of literature on their control, especially in the context of diffusion models, see \cite{Sinho23,Jianfeng23,Benton24,sifanDP}.
Once suitable error bounds are established, resulting in bounds on rejection probabilities, they can be inserted into \Cref{thm:MDPMCmix,cor:MDPMCmix} to make the dependencies on discretization and score approximation errors explicit.
\end{remark}

\begin{remark}[Practical Use: Warm-up Steps]
Our considerations above, specifically the ability of ideal Diffusion Path Monte Carlo to generate a warm start in one transition, see \Cref{sec:gapmix}, motivates the following suggestion for the practical use of the Metropolis-adjusted method:
Before proceeding with the Metropolis-adjusted transitions $\Pi_{(t)}$, one may consider implementing one or a couple of transitions $Q_{(t)}$ omitting adjustment.
Assuming $Q_{(t)}$ to describe the target reasonably well, this produces an informed initialization for $\Pi_{(t)}$.
In particular, it prevents initialization in the tails of the distribution, to which Metropolis adjustment is known to be sensitive, see \cite{Diss}.
\end{remark}

\begin{remark}[Mixing in One Step]
In \Cref{cor:oneDPMC}, we saw that ideal Diffusion Path Monte Carlo, with suitably large noising time, mixes in a single transition to a given accuracy.
This cannot be expected for the Metropolis-adjusted method as the chain remains in its previous state in case of rejection and hence does not proceed to an approximate draw from the target distribution, regardless of noising time.
In particular, for mixing to accuracy $\eps$ in one transition, the rejection probability would need to be of order $\eps$.
On the one hand, in Metropolis-adjusted MCMC methods involving discretization similar to the Euler--Maruyama scheme used here, such as the Metropolis-adjusted Langevin Algorithm (MALA) and Hamiltonian Monte Carlo (HMC), this typically leads to step sizes depending algebraically on the accuracy, while rejection probabilities being controlled above by a constant $c<1$ commonly allows much larger step sizes scaling inverse logarithmically with $\eps^{-1}$, see \cite{Diss}.
On the other hand, this disregards the score approximation error, which may exclude such small rejection probabilities, regardless of discretization step size $h$.
\end{remark}

\begin{proof}[Proof of \Cref{thm:MDPMCmix}]
Let $D\subseteq\R^d$, $\nu\in\calP(\R^d)$, and $n\geq0$.
The proof is based on \Cref{cor:mixlocDob}, which asserts that
\begin{equation*}
    \TV\bigl(\nu\Pi_{(t)}^n,\mu\bigr)\ \leq\ \left(\sup_{x,y\in D}\TV\bigl(\Pi_{(t)}(x,\cdot),\Pi_{(t)}(y,\cdot)\bigr)\right)^n+\ \P_{\nu}(T<n)\ +\ \P_{\mu}(T<n).
\end{equation*}
To conclude, it thus suffices to show
\begin{equation}\label{eq:MetDob}
\sup_{x,y\in D}\TV\bigl(\Pi_{(t)}(x,\cdot),\Pi_{(t)}(y,\cdot)\bigr)\ \leq\ \frac{\diam(D)}{2\sqrt{e^t-1}}\ +\ 2\,\sup_{x\in D}\int\bigl(1-\alpha(\hat y,R\hat y)\bigr)Q(x,d\hat y).
\end{equation}

Therefore, let $x,y\in D$ with $x\ne y$.
Let $\bigl(\hat X,\hat Y\bigr)$ be a coupling of $Q(x,\cdot)$ and $Q(y,\cdot)$, see \eqref{eq:Q}, such that $\bigl(\hat X_N,\hat Y_N\bigr)$ is a maximal coupling of $Q_{(t)}(x,\cdot)$ and $Q_{(t)}(y,\cdot)$, see \eqref{eq:Qft}, that is,
\begin{equation}\label{eq:maximalNhalf}
    \mathbb P\bigl(\hat X_N\ne\hat Y_N\bigr)\ =\ \TV\bigl(Q_{(t)}(x,\cdot),Q_{(t)}(y,\cdot)\bigr).
\end{equation}
The random variables
\begin{align*}
X\ =\ \hat X_N\,\ind_{\{U\leq\alpha(\hat X,R\hat X)\}} + x\,\ind_{\{U>\alpha(\hat X,R\hat X)\}},\quad
Y\ =\ \hat Y_N\,\ind_{\{U\leq\alpha(\hat Y,R\hat Y)\}} + y\,\ind_{\{U>\alpha(\hat Y,R\hat Y)\}}
\end{align*}
with $U\sim\Unif(0,1)$ independent of $(\hat X,\hat Y)$, define a coupling $(X,Y)$ of the Metropolis-adjusted Diffusion Path Monte Carlo transitions $\Pi_{(t)}(x,\cdot)$ and $\Pi_{(t)}(y,\cdot)$.
Hence, by the coupling characterization of total variation, we have
\[ \TV\bigl(\Pi_{(t)}(x,\cdot),\Pi_{(t)}(y,\cdot)\bigr)\ \leq\ \P(X\ne Y). \]
Note that, if both chains are accepted, $X\ne Y$ holds if and only if $\hat X_N\ne\hat Y_N$.
Thus,
\[ \P(X\ne Y)
\ \leq\ \P\bigl(\hat X_N\ne\hat Y_N\bigr)\ +\ \P\Bigl(U>\min\bigl(\alpha(\hat X,R\hat X),\alpha(\hat Y,R\hat Y)\bigr)\Bigr). \]
Inserting \eqref{eq:maximalNhalf} shows
\begin{equation*}
\begin{aligned}[t]
\TV\bigl(\Pi_{(t)}(x,\cdot),\Pi_{(t)}(y,\cdot)\bigr)\ \leq\ \, &\TV\bigl(Q_{(t)}(x,\cdot),Q_{(t)}(y,\cdot)\bigr) \\
&+\ \P\Bigl(U>\min\bigl(\alpha(\hat X,R\hat X),\alpha(\hat Y,R\hat Y)\bigr)\Bigr).
\end{aligned}
\end{equation*}

As $Q_{(t)}=\Qf_{(t)}\Qb_{(t)}$ and total variation is non-increasing under the action of any Markov kernel,
\[ \TV\bigl(Q_{(t)}(x,\cdot),Q_{(t)}(y,\cdot)\bigr)\ \leq\ \TV\bigl(\Qf_{(t)}(x,\cdot),\Qf_{(t)}(y,\cdot)\bigr). \]
Further, since 
\[ \Qf_{(t)}(x,\cdot)\ =\ \N\left((1-\tfrac12h)^{t/h}x,\frac{1-(1-\frac12h)^{2t/h}}{1-\frac14h}I_d\right)\quad\text{for all $x\in\R^d$}, \]
see \eqref{eq:Qf} and \eqref{eq:Qft}, using \cite[Thm.~1]{Barsov86} similarly as in \eqref{eq:TV_Pf} above, the total variation on the right hand side evaluates to
\begin{align*}
\TV\bigl(\Qf_{(t)}(x,\cdot),\Qf_{(t)}(y,\cdot)\bigr)
\ &=\ 2\,\Phi\Biggl(\frac12\left(\frac{(1-\frac12h)^{-2t/h}-1}{1-\frac14h}\right)^{-1/2}|x-y|\Biggl)\ -\ 1 \\
&\leq\ \frac{1}{\sqrt{2\pi}}\left(\frac{(1-\frac12h)^{-2t/h}-1}{1-\frac14h}\right)^{-1/2}|x-y| \\
&\leq\ \frac{|x-y|}{2\sqrt{e^t-1}},
\end{align*}
where we used that $1-\frac12h\leq e^{-\frac12 h}$ for $0<h\leq2$ in the last step.
This shows, for all $x,y\in D$,
\begin{equation}\label{eq:MetDobstr}
    \TV\bigl(\Pi_{(t)}(x,\cdot),\Pi_{(t)}(y,\cdot)\bigr)\ \leq\ \frac{\diam(D)}{2\sqrt{e^t-1}}\ +\ \P\Bigl(U>\min\bigl(\alpha(\hat X,R\hat X),\alpha(\hat Y,R\hat Y)\bigr)\Bigr).
\end{equation}
Finally, simplifying the second term on the right hand side along
\begin{align*}
\P\Bigl(U>\min\bigl(\alpha(\hat X,R\hat X),\alpha(\hat Y,R\hat Y)\bigr)\Bigr)
\ &=\ \E\Bigl(1-\min\bigl(\alpha(\hat X,R\hat X),\alpha(\hat Y,R\hat Y)\bigr)\Bigr) \\
&\leq\ \E\bigl(1-\alpha(\hat X,R\hat X)\bigr)+\E\bigl(1-\alpha(\hat Y,R\hat Y)\bigr) \\
&\leq\ 2\,\sup_{x\in D}\int\bigl(1-\alpha(\hat y,R\hat y)\bigr)Q(x,d\hat y)
\end{align*}
establishes \eqref{eq:MetDob}, concluding the proof.
\end{proof}

\begin{remark}
The proof, specifically of \eqref{eq:MetDob}, can be simplified via the triangle inequality for total variation.
Namely, it holds
\begin{align*}
\TV\bigl(\Pi_{(t)}(x,\cdot),\Pi_{(t)}(y,\cdot)\bigr)
\ &\leq\ 
\begin{aligned}[t]
&\TV\bigl(\Pi_{(t)}(x,\cdot),Q_{(t)}(x,\cdot)\bigr) + \TV\bigl(Q_{(t)}(x,\cdot),Q_{(t)}(y,\cdot)\bigr) \\
&+ \TV\bigl(Q_{(t)}(y,\cdot),\Pi_{(t)}(y,\cdot)\bigr)
\end{aligned} \\
&\leq\ \frac{\diam(D)}{2\sqrt{e^t-1}}\ +\ 2\,\sup_{x\in D}\int\bigl(1-\alpha(\hat y,R\hat y)\bigr)Q(x,d\hat y),
\end{align*}
where the second step uses the bound on the total variation distance between $Q_{(t)}(x,\cdot)$ and $Q_{(t)}(y,\cdot)$ established above, and the fact that, in the language of the above proof, $\bigl(X,\hat X_N\bigr)$ couples $\Pi_{(t)}(x,\cdot)$ and $Q_{(t)}(x,\cdot)$ so that
\[ \TV\bigl(\Pi_{(t)}(x,\cdot),Q_{(t)}(x,\cdot)\bigr)\ \leq\ \P\bigl(X\ne\hat X_N\bigr)\ \leq\ \int\bigl(1-\alpha(\hat y,R\hat y)\bigr)Q(x,d\hat y). \]
However, this argument does not establish the stronger estimate \eqref{eq:MetDobstr} involving the counter probability of mutual acceptance, which admits weaker assumptions on the Metropolis correction than $\mathfrak P_r(D)<1/2$, see \Cref{rem:weakerMet}.
\end{remark}

To close our considerations, we show that the Metropolis acceptance probability in high-dimensional standard Gaussian targets, for certain step sizes, is controlled away from zero in typical stationary draws while degenerating exponentially in the origin.
Therefore, at such step sizes, Metropolis-adjusted Diffusion Path Monte Carlo locally mixes rapidly in a high-probability region, contrasted with much slower global mixing due to severe traps in the low-probability region around the origin, similar to the seldom-trapped chain discussed in the introduction to motivate local geometric mixing.
This illustrates the need to localize the analysis to accurately capture local geometric mixing, either through the approach presented here or its alternatives discussed in the introduction.

\begin{example}[Metropolis-adjusted Diffusion Path Monte Carlo as Seldom-trapped Chain]
Consider Metropolis-adjusted Diffusion Path Monte Carlo with invariant distribution $\mu=\N(0,I_d)$, step size $h\in(0,2)$, noising time $t\in h\mathbb N$, using the exact score $\mathfrak s(x)=-x$.
Write $N=2t/h$, as well as $\mathfrak P_r(x):=\mathfrak P_r(\{x\})$ and $\mathfrak P_a(x):=1-\mathfrak P_r(x)$ for the rejection and acceptance probabilities in $x\in\R^d$.
Below, we show that, in typical draws $x\sim\mu$, acceptance probabilities are controlled away from zero with high probability for suitable $h=\mathcal O(d^{-1/2})$, whereas in the origin,
\[ \mathfrak P_a(0)\ \leq\ \exp\left(-\frac{1-e^{-2t}}8hd\right), \]
that is, acceptance probabilities degenerate to zero exponentially as $d\to\infty$ unless $h=\mathcal O(d^{-1})$.
In particular, for suitable $h=\Theta(d^{-1/2})$, acceptance probabilities are controlled away from zero with high probability in typical stationary draws while degenerating in the origin.

To see the claimed bounds, note that, since noising and denoising steps use the same transitions, the Metropolis acceptance ratio telescopes, leading to
\begin{equation}\label{eq:Gaussexalpha}
    \alpha(\hat y,R\hat y)\ =\ \exp\left(-\frac h8\bigl(|\hat y_N|^2-|\hat y_0|^2\bigr)^+\right),
\end{equation}
where $\cdot^+:=\max(\cdot,0)$.

On the one hand, using that under $Q(x,\cdot)$,
\begin{equation}\label{eq:GaussexN}
    \hat y_N\ \sim\ \N\left(\bigl(1-\tfrac12h\bigr)^Nx,\frac{1-(1-\frac12h)^{2N}}{1-\frac14h}I_d\right),
\end{equation}
it holds
\begin{align*}
    \mathfrak P_r(x)\ &=\ \int\bigl(1-\alpha(\hat y,R\hat y)\bigr)\,Q(x,d\hat y)\ \leq\ \frac h8\int\bigl||\hat y_N|^2-|x|^2\bigr|\,Q(x,d\hat y)\\
    &=\ \frac h8\bigl(1-(1-\tfrac12h)^{2N}\bigr)\bigl((1-\tfrac14h)^{-1}d-|x|^2\bigr)+\mathcal O\bigl(h|x|\bigr).
\end{align*}
Inserting that $|x|^2=d+\mathcal O(d^{1/2})$ with high probability for $x\sim\N(0,I_d)$, see \cite{Ledoux01}, shows that the rejection probability in typical draws from the target are
\[ \mathfrak P_r(x)\ =\ \mathcal O\bigl(hd^{1/2}+h^2d\bigr)\quad\text{with high probability,} \]
ensuring acceptance probabilities to be controlled away from zero for suitable step sizes $h=\mathcal O(d^{-1/2})$.

On the other hand, using \eqref{eq:Gaussexalpha} and \eqref{eq:GaussexN} yields
\begin{align*} \mathfrak P_a(0)\ &=\ \int\alpha(\hat y,R\hat y)\,Q(0,d\hat y)\ \leq\ \int\exp\left(-\frac h8|\hat y_N|^2\right)\,Q(0,d\hat y)\\
&=\ \left(\frac{4-h}{4-h(1-\frac12h)^{2N}}\right)^{d/2}\ \leq\ \left(1-\frac{1-e^{-2t}}4h\right)^{d/2}\\
&\leq\ \exp\left(-\frac{1-e^{-2t}}8hd\right).
\end{align*}
\end{example}

\section*{Acknowledgements}

The author gratefully acknowledges funding by the Deutsche Forschungsgemeinschaft (DFG, German Research Foundation) – CRC 1720 – 539309657

\printbibliography

\end{document}